\documentclass[a4paper,11pt,oneside, reqno]{amsart}

\usepackage{enumerate}
\usepackage[utf8]{inputenc}
\usepackage[T1]{fontenc}
\usepackage{amssymb}
\usepackage{amsfonts}
\usepackage{url}
\usepackage{hyperref,srcltx,xcolor}
\usepackage{cleveref}
\usepackage{array, graphicx}
\usepackage[width=0.95\textwidth]{caption}

\usepackage{mathtools}
\mathtoolsset{showonlyrefs}

\newcommand{\field}[1]{\mathbb{{#1}}}
\newcommand{\CM}{\field{C}}
\newcommand{\RM}{\field{R}}
\DeclareMathOperator{\Imm}{Im}
\DeclareMathOperator{\Ree}{Re}
\let\gsl=\geqslant
\let\lsl=\leqslant
\renewcommand{\lsl}{\leqslant}
\renewcommand{\gsl}{\geqslant}
\newcommand{\dd}{\mathrm{d}}
\newcommand{\fp}[1]{\left\{#1\right\}}
\newcommand{\ip}[1]{\left\lfloor#1\right\rfloor}

\newtheorem{theo}{Theorem}
\newtheorem{lem}[theo]{Lemma}

\theoremstyle{definition}
\newtheorem{rk}[theo]{Remark}

\numberwithin{equation}{section}

\newenvironment{acknowledgements}{\subsection*{Acknowledgements}}{}

\allowdisplaybreaks

\author[S.~R.~Garcia]{Stephan Ramon Garcia}
\address[S.~R.~Garcia]{Department of Mathematics and Statistics\\
         Pomona College\\
         610 N. College Ave.\\
         Claremont, CA 91711\\
         USA}
\email{stephan.garcia@pomona.edu}
\urladdr{\url{https://stephangarcia.sites.pomona.edu/}}

\author[L.~Grenié]{Loïc Grenié}
\address[L.~Grenié]{Dipartimento di Ingegneria gestionale, dell'informazione e della produzione\\
         Università di Bergamo\\
         viale Marconi 5\\
         I-24044 Dal\-mi\-ne\\
         Italy}
\email{loic.grenie@gmail.com}

\author[G.~Molteni]{Giuseppe Molteni}
\address[G.~Molteni]{Dipartimento di Matematica\\
         Università di Milano\\
         via Saldini 50\\
         I-20133 Milano\\
         Italy}
\email{giuseppe.molteni1@unimi.it}
\urladdr{\url{https://sites.unimi.it/molteni/}}

\title[Explicit upper and lower bounds]
      {Explicit upper and lower bounds on certain functions related to $\zeta(s)$ for $s>1$}

\keywords{Riemann zeta function, Stieltjes constants}
\subjclass[2020]{11M06}

\begin{document}
\begin{abstract}
We provide explicit upper and lower bounds for certain commonly occurring expressions that involve the 
Riemann zeta function $\zeta(s)$ and its derivatives on $s>1$.  For example, this improves upon existing 
bounds for $(s-1)\zeta(s)$ and its derivatives, and for $(\zeta'/\zeta)(s)$.
\end{abstract}

\maketitle


\section{Introduction}
Our aim in this paper is to provide novel explicit bounds for certain commonly occurring expressions that
involve the Riemann zeta function $\zeta(s)$ and its derivatives on $s>1$. For example, we improve upon
existing bounds for expressions such as $(s-1)\zeta(s)$ and its derivatives, and for $(\zeta'/\zeta)(s)$.
By ``explicit'' we mean that our bounds are in the form of inequalities with specified ranges of
applicability, as opposed to asymptotic expressions with hidden implied constants. This project arose as
an outgrowth of other endeavors that required such bounds and from our realization that explicit bounds
either did not exist in the literature or were not strong enough for our purposes.

This introduction conducts a brief literature review of each family of bounds before stating our
corresponding theorem, each of which improves upon the bounds we found in the literature. The proofs of
our theorems are located in later sections so that we can present each result in context within the
literature. This provides a better flow and permits this paper to serve as a ``one-stop shop'' for
explicit bounds for functions related to $\zeta(s)$ on $s>1$.

\subsection{Bounds on \texorpdfstring{$(s-1)\zeta(s)$}{(s-1)zeta(s)}}
Since $\zeta(s)$ has a simple pole at $s=1$ with residue $1$, it follows that $\lim_{s\to1^-}
(s-1)\zeta(s)=1$. It is of interest to determine the manner in which this approach occurs. The integral
test furnishes a first step in this direction, since it implies that
\begin{equation}\label{eq:IntegralTest}
 1 < (s-1)\zeta(s) < s    
\end{equation}
for $s > 1$. In fact, $(s-1)\zeta(s)$ is strictly increasing there; see Subsection \ref{Subsection:s1zsprime}.
The convexity of $x^{-s}$ and some algebra offer the more precise bounds
\begin{equation}\label{eq:SLB2}
 \frac{1}{2}(s+1) < (s-1)\zeta(s) < (s-1) + \bigg( \frac{2}{3} \bigg)^{s-1}.
\end{equation}
The upper bound is sharp in the sense that the difference between it and $(s-1)\zeta(s)$ tends to $0$ as
$s\to\infty$. In 2002, Bastien and Rogalski showed that $(s-1)\zeta(s) < 2^{s-1}$ for $s > 1$
\cite[Cor.~1, p.~918]{BastienRogalski}. Ramar\'{e} improved this to
\begin{equation}\label{eq:Ramare}
 (s-1)\zeta(s) < e^{\gamma(s-1)}
\end{equation}
for $s>1$ \cite[Lem.~5.4]{Ramare:Density}. Here $\gamma$ denotes the Euler--Mascheroni constant.
The upper bound in \eqref{eq:SLB2} beats \eqref{eq:Ramare} for $s > 1.187$. We found no improvements on 
the lower bound in \eqref{eq:SLB2} in the literature. 

Our main result on $(s-1)\zeta(s)$ is the following.

\begin{theo}\label{Theorem:JustZeta}
For $s>1$,
\begin{equation}\label{eq:s1z3}
s - (1-\gamma)\frac{s-1}{s} - \frac{(s-1)^2}{s^2}
<
(s-1)\zeta(s)
<
s - (1-\gamma)\frac{s-1}{s} - \frac{(s-1)^2}{3 s^2}.
\end{equation}
\end{theo}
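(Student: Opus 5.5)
We aim for a representation of $(s-1)\zeta(s)$ with an explicit remainder whose sign and size are easy to control. The natural starting point is the Euler--Maclaurin (or Abel summation) formula
\begin{equation}
\zeta(s) = \frac{s}{s-1} - s\int_1^\infty \frac{\fp{x}}{x^{s+1}}\,\dd x ,
\end{equation}
valid for $s>0$, which gives $(s-1)\zeta(s) = s - s(s-1)I(s)$ with $I(s)=\int_1^\infty \fp{x}x^{-s-1}\,\dd x$. Dividing the claim \eqref{eq:s1z3} through by $s(s-1)$ and putting $t=(s-1)/s\in(0,1)$, the statement becomes
\begin{equation}
\frac{1-\gamma}{s^2} + \frac{s-1}{3s^3} < I(s) < \frac{1-\gamma}{s^2} + \frac{s-1}{s^3}.
\end{equation}
As a sanity check, at $s=1$ we have $I(1)=1-\gamma$, which matches.

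To expose the structure, I would write $\fp{x}=\tfrac12+\psi_1(x)$ with $\psi_1(x)=\fp{x}-\tfrac12$, and integrate by parts once more using the periodic primitive $P_2(x)=\tfrac12(\fp{x}^2-\fp{x})+\tfrac1{12}\le 0$ up to the constant. This gives
\begin{equation}
I(s)=\frac1{2s}+\frac{\text{const}}{\,}\cdots - (s+1)\int_1^\infty \frac{P_2(x)}{x^{s+2}}\,\dd x .
\end{equation}
However, this route naturally produces expansions in $1/s$ rather than in $(1-\gamma)/s^2$.

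A cleaner reformulation is therefore preferable. Set
\begin{equation}
F(s)= s^2 I(s) - (1-\gamma),
\end{equation}
so that the claim reads $\tfrac{t}{3}<F(s)<t$ with $t=(s-1)/s$. We have $F(1)=0$, and as $s\to\infty$, $I(s)=\sum_n\int_0^1 u(n+u)^{-s-1}\,\dd u$ is dominated by $n=1$. Hence $s^2I(s)\to 1$, so $F\to\gamma$ while $t\to1$. The upper bound therefore has room ($\gamma<1$), and the lower bound $\gamma>1/3$ does too.

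The plan is to prove the two inequalities $F(s)-t/3>0$ and $t-F(s)>0$ by a combination of analysis near $s=1$, analysis for large $s$, and a numerical middle range.

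Near $s=1$, I would use the Laurent expansion $(s-1)\zeta(s)=1+\gamma(s-1)-\gamma_1(s-1)^2+\cdots$ with the known Stieltjes constants. Comparing with the expansions of the two bounds, which are $1+\gamma(s-1)+(\gamma-1-c)(s-1)^2+\cdots$ with $c=1$ or $1/3$, the second-order coefficients must satisfy $\gamma-2<-\gamma_1<\gamma-4/3$. Since $\gamma_1\approx-0.0728$, this holds, and an explicit tail bound on the Stieltjes constants gives the range $1<s\le s_0$.

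For large $s$, I would use the elementary bounds $\sum_{n\ge3}n^{-s}\le 3^{-s}+\int_3^\infty x^{-s}\,\dd x$ for $\zeta(s)$. Writing $(s-1)\zeta(s)=(s-1)(1+2^{-s}+\cdots)$, this is to be compared with the bounds, which are $s-(1-\gamma)+O(1/s)$ minus $(s-1)^2/(cs^2)$. Both sides grow like $s$. The difference is bound $-(s-1)$, which tends to $1-(1-\gamma)-1/c=\gamma-1/c$, that is $\gamma-1<0$ for $c=1$ and $\gamma-3<0$ for $c=3$, whereas $(s-1)\zeta(s)-(s-1)\to 1$.

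This seems inconsistent, so I would recheck: the lower bound tends to $s-1+\gamma-\ldots$, and the actual value $(s-1)(1+2^{-s})\approx s-1$ plus $1$ from the term $n=1$ times\ldots\ In fact $(s-1)\cdot1=s-1$, so the actual value is $\approx s-1$. The lower bound $\approx s-1+\gamma-1 = s-2+\gamma$ lies below it, and the upper bound $\approx s-1+\gamma-1/3+1-1$ lies above it. Both limits are consistent with the claim, and the monotone error terms $O(s2^{-s})$ settle all $s\ge s_1$ for some explicit $s_1$.

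The main obstacle is the middle range $s_0<s<s_1$. There I would verify the inequalities by interval arithmetic on a grid, using the monotonicity of $(s-1)\zeta(s)$ together with derivative bounds for the comparison functions to pass from grid points to intervals. Alternatively, one could search for a single-formula proof via a sharper integral representation, such as the convexity of $x^{-s}$ on unit intervals as in \eqref{eq:SLB2}.
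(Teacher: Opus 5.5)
\paragraph{Assessment.} Your plan has the same architecture as the paper's proof: a truncated Laurent series with an explicit tail bound on the Stieltjes constants near $s=1$, and elementary Dirichlet-series estimates for large $s$. However, the one step where the theorem is genuinely delicate is computed wrongly.

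\paragraph{The error near $s=1$.} Both bounds agree with $(s-1)\zeta(s)=1+\gamma u-\gamma_1u^2+\cdots$ ($u=s-1$) to first order, so everything near $s=1$ hinges on the $u^2$ coefficients. Since $\frac{u}{1+u}=u-u^2+\cdots$, the function $s-(1-\gamma)\frac{s-1}{s}-c\frac{(s-1)^2}{s^2}$ expands as $1+\gamma u+(1-\gamma-c)u^2+\cdots$, not with $(\gamma-1-c)u^2$.

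Your resulting condition $\gamma-2<-\gamma_1<\gamma-\frac43$ is false, because $\gamma-\frac43\approx-0.756<0<-\gamma_1$. Taken literally, it says the upper bound \emph{fails} immediately to the right of $s=1$. So your assertion that it holds is an error, not a check.

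The correct condition is $-\gamma<-\gamma_1<\frac23-\gamma$, which does hold. On the upper side, though, the margin is only $\frac23-\gamma+\gamma_1\approx0.0166$; in your notation, $F'(1)=1-\gamma+\gamma_1\approx0.350$ against $\frac13$. This thin margin is why the range near $s=1$ must be handled with a concrete, fairly strong tail estimate rather than qualitatively.

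\paragraph{Missing quantitative content.} Apart from this, every quantitative ingredient is left open: $s_0$, $s_1$, and the bound on $|\gamma_n|$. The middle-range interval arithmetic you flag as the main obstacle would work via monotonicity, but it is not needed.

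The paper uses Lemma~\ref{Lemma:ZhangWilliams}, $|\gamma_n|/(n-1)!\lsl 4\sqrt2\,(2/(\pi e))^n$. This bounds the tail after $N$ terms by $\frac{22}{N}\bigl(\frac{2(s-1)}{e\pi}\bigr)^N$ on $[1,3]$, and $N=5$ already gives polynomial inequalities valid on all of $[1,3]$.

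For $s>3$, the paper uses Abel summation from $n=2$, together with $\fp{x}\gsl 0$ and the negativity of $\int_2^\infty(\fp{x}-\frac12)x^{-s-1}\,\dd x$. This gives $s-1+\frac{s+3}{2^{s+1}}<(s-1)\zeta(s)<s-1+\frac{s+1}{2^s}$, which suffices there.

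A genuine lower estimate of this kind is really needed. The target lower bound exceeds $s-1$ for all $s$ below about $5.28$, and even $(s-1)(1+2^{-s})$ falls below it at $s=3$. Your large-$s$ sketch records only an upper estimate for $\sum_{n\ge3}n^{-s}$.
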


The proof is contained in Section \ref{Section:JustZeta}. Since it is small $s>1$ that are of greatest
interest, we examine how the bounds of Theorem \ref{Theorem:JustZeta} behave in this regime. For $1 < s <
1.338$, the upper bound in \eqref{eq:s1z3} improves upon Ramar\'{e}'s bound \eqref{eq:Ramare}. The lower
bound in \eqref{eq:s1z3} beats the lower bound in \eqref{eq:SLB2} for $1 < s < 1.266$. Thus, Theorem
\ref{Theorem:JustZeta} improves upon the existing upper and lower bounds on $(s-1)\zeta(s)$ in the
crucial small $s>1$ regime.

\subsection{Bounds on \texorpdfstring{$[(s-1)\zeta(s)]'$}{[(s-1)zeta(s)]'}}\label{Subsection:s1zsprime}
Lower bounds on $[(s-1)\zeta(s)]'$ typically appear as part of greater endeavors, and upper bounds 
seem little researched. For $s>1$, Delange proved that 
\begin{equation}\label{eq:DelangeDerivative}
 \frac{1}{2s} < [(s-1)\zeta(s)]'
\end{equation}
when proving his lower bound \eqref{eq:DelangeZ'Z} on $(\zeta'/\zeta)(s)$ \cite[p.~334]{Delange:Remarque}. 
An improved lower bound comes from Alzer and Kwong \cite{AlzerKwong}, who showed that
\begin{equation}\label{eq:AlzerKwong}
 \frac{\zeta'}{\zeta}(s) + \frac{1}{s-1} > \frac{\gamma}{s}
\end{equation}
for $s>1$, which ensures that 
\begin{equation}\label{eq:ImplicitAlzer}
 [(s-1)\zeta(s)]' > \frac{\gamma}{s} (s-1)\zeta(s) \gsl \frac{\gamma}{s}  
\end{equation}
by \eqref{eq:IntegralTest}, which improves Delange's inequality \eqref{eq:DelangeDerivative} since 
$\gamma > \frac{1}{2}$.

Our next result provides explicit upper and lower bounds on $[(s-1)\zeta(s)]'$. These bounds agree with 
$[(s-1)\zeta(s)]'$ to the first order at $s=1$ (equalities of value and first derivative) and to zeroth 
order at $s\to\infty$.
Consequently, they improve upon the lower bounds \eqref{eq:DelangeDerivative} and \eqref{eq:ImplicitAlzer}. 
As mentioned above, upper bounds appear to be absent in the literature.

Before proceeding, we require the Stieltjes constants $\gamma_n$. These are the Laurent coefficients of 
$\zeta(s)$ at $s=1$; see Section \ref{Section:Stieltjes} for further information. Note that $\gamma_0 = 
\gamma$ is the Euler--Mascheroni constant and $\gamma_1 \approx -0.072816$.

\begin{theo}\label{Theorem:FirstDerivative}
For $s>1$,
\begin{equation*}
1 - \frac{(1-\gamma)^2}{1-\gamma - 2\gamma_1(s-1)}
\lsl [(s-1)\zeta(s)]'
\lsl 1- \frac{1-\gamma + 2\gamma_1(s-1)}{1 + \tfrac{1}{8}(s-1)^2}.
\end{equation*}
\end{theo}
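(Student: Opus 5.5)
The plan is to work with $t=s-1>0$, $f(s)=(s-1)\zeta(s)$ and $g(s)=1-f'(s)$. Both inequalities are then statements about $g$ alone. From the Laurent expansion $f(s)=1+\gamma t-\gamma_1 t^2+O(t^3)$ we get $g(1)=1-\gamma$ and $g'(1)=2\gamma_1$. Clearing the (positive) denominators, the claim becomes
\begin{equation*}
\frac{1-\gamma+2\gamma_1 t}{1+\tfrac18 t^2}\;\lsl\; g(s)\;\lsl\;\frac{(1-\gamma)^2}{1-\gamma-2\gamma_1 t}.
\end{equation*}
Here $1-\gamma-2\gamma_1 t>0$ because $\gamma_1<0$. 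The right-hand inequality says that $1/g$ lies above its tangent line at $t=0$. The left-hand one says that $(1+\tfrac18t^2)\,g(s)$ lies above its own tangent line at $t=0$. So the structure is two "tangent-line" inequalities, which explains the first-order agreement at $s=1$.

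\emph{Step 1: a usable formula for $g$.} I would write $\zeta(s)=\frac{s}{s-1}-s\int_1^\infty \fp{x}x^{-s-1}\,\dd x$. Substituting $x=e^u$ gives $f(s)=s-s(s-1)\Phi(s)$, where $\Phi(s)=\int_0^\infty \fp{e^u}e^{-su}\,\dd u$. Then
\begin{equation*}
g(s)=(2s-1)\Phi(s)+s(s-1)\Phi'(s).
\end{equation*}
This expresses $g$ as a Laplace transform of an explicit function of $u$. It makes $g>0$ transparent, and it gives $g(s)\to0$ as $s\to\infty$, consistent with both bounds tending to $1$.

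\emph{Step 2: reduce to sign conditions.} Set
\begin{equation*}
U(t)=g(s)(1-\gamma-2\gamma_1t)-(1-\gamma)^2,\qquad L(t)=(1+\tfrac18t^2)g(s)-(1-\gamma)-2\gamma_1t.
\end{equation*}
The goal is $U\lsl0\lsl L$ on $t>0$. Both functions vanish to second order at $t=0$, so it suffices to show $U''\lsl0$ and $L''\gsl0$ on $t>0$, i.e.\ $U$ concave and $L$ convex. Alternatively, one can verify the sign of the second Taylor coefficient and control the remainder.

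\emph{Step 3: split the range of $t$.} For small $t$, say $0<t\lsl t_0$, I would use the Stieltjes expansion of $g$ with known explicit bounds $|\gamma_n|\lsl C\,n!/A^n$ to bound the tail. Then $U$ and $L$ are polynomials up to an explicitly bounded error, and the sign is governed by the $t^2$ coefficients. These involve $\gamma_2$ together with $\gamma$ and $\gamma_1$; the constant $\tfrac18$ is presumably chosen so that the coefficient in $L$ has the right sign with a margin.

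For large $t$, I would bound $g$ directly. Comparing $\zeta'$ with $\sum n^{-s}\log n$ gives explicit estimates such as $g(s)=O(t\,2^{-t})$ with constants, together with crude two-sided bounds of the type in \eqref{eq:SLB2}. These show $U<0<L$ once $t\gsl t_1$, since the comparison functions on both sides decay only like $1/t$.

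\emph{Main obstacle.} The hardest part will be the intermediate window $t_0<t<t_1$. There I would evaluate $g$ rigorously on a grid, using interval arithmetic on the Laplace formula of Step 1 or Euler--Maclaurin for $\zeta$ and $\zeta'$. Between grid points I would use a Lipschitz bound for $U$ and $L$ derived from the same integral representation, namely monotonicity of $\Phi$ and explicit bounds on $|\Phi'|$ and $|\Phi''|$. I expect the lower bound to be the delicate one near $t\approx1$ to $3$, where the factor $\tfrac18t^2$ and the linear term compete. If the margin turns out to be too thin, I would tighten it by first proving a cruder bound such as $g(s)\gsl(1-\gamma)e^{2\gamma_1 t/(1-\gamma)}$ and then comparing that exponential with the rational function.
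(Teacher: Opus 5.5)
Your main line is a workable strategy, but it is organized differently from the paper's. That main line consists of a Taylor expansion of $g$ with an explicit Stieltjes tail near $t=0$, Dirichlet-series bounds for large $t$, and certified evaluation in between. The small-$t$ step does work: the $t^2$ coefficients of $U$ and $L$ are $-\tfrac32(1-\gamma)\gamma_2-4\gamma_1^2\approx-0.015$ and $\tfrac18(1-\gamma)-\tfrac32\gamma_2\approx0.067$, so the signs are right.

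The paper never studies $U$ and $L$. Instead it proves Theorem~\ref{Theorem:DoublePrime}: two-sided bounds on $[(s-1)\zeta(s)]''=-g'$ on $[1,4]$ that are exactly the derivatives of the two comparison functions. It does this by truncating the Stieltjes expansion at $N=30$ and using the tail bound from Lemma~\ref{Lemma:ZhangWilliams}. It then integrates from $s=1$, where all three functions equal $\gamma$. For $s>4$ it brackets $g(s)=\sum_{n\gsl2}((s-1)\log n-1)n^{-s}$ between its $n=2$ term and that term plus an integral, which is essentially your large-$t$ step.

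The trade-offs are as follows. The paper proves a stronger, derivative-level inequality. In exchange, on the compact range everything reduces to one polynomial check with an explicit error, and Theorem~\ref{Theorem:DoublePrime} comes out as a by-product. Your route asks only for the target inequality itself, which gives more slack. The price is a three-regime split and interval evaluation of $\zeta,\zeta'$.

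You can remove the grid entirely. The Zhang--Williams majorant for the Taylor series of $g$ converges for $t<\pi e/2\approx4.27$, so your small-$t$ argument, with enough terms, covers all of $0\lsl t\lsl3$. Moreover, $L\gsl0$ is trivial for $t>(1-\gamma)/(2|\gamma_1|)\approx2.9$. There $(1-\gamma+2\gamma_1t)/(1+\tfrac18t^2)<0$, while every term of the Dirichlet series of $g$ is positive once $t>1/\log 2$.

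Two auxiliary claims are false and should be dropped.

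\emph{Global concavity/convexity in Step~2.} The reduction to concavity of $U$ and convexity of $L$ on all of $t>0$ cannot succeed. A concave function on $[0,\infty)$ that is bounded below is nondecreasing, whereas $U(0)=0$ and $U(t)\to-(1-\gamma)^2$. Numerically one also finds $L''(4)\approx-0.007$. So the second-derivative test is usable only near $t=0$, which is all your Step~3 actually uses.

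\emph{The exponential fallback.} The bound $g(s)\gsl(1-\gamma)e^{2\gamma_1t/(1-\gamma)}$ fails for small $t>0$. Its $t^2$ coefficient is $2\gamma_1^2/(1-\gamma)\approx0.025$, while that of $g$ is $-\tfrac32\gamma_2\approx0.015$. Fortunately the margins do not require it; for example, at $t=1$ one has $0.246<g\approx0.293<0.314$.

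Finally, $g>0$ is not ``transparent'' from $(2s-1)\Phi+s(s-1)\Phi'$, since the kernel $(2s-1)-s(s-1)u$ changes sign. Working with $U$ and $L$ does not need it anyway.
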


The proof of this theorem is in Section \ref{Section:[(s-1)zeta(s)]'}. This is presented somewhat out of 
order, since it involves the integration of the bound that appears in Theorem \ref{Theorem:DoublePrime} below. 
We have elected to present things in the present order since it seems natural for the first derivative to 
come before the second.

\subsection{Bounds on \texorpdfstring{$[(s-1)\zeta(s)]''$}{[(s-1)zeta(s)]''}}
Our search of the literature produced no published bounds on the second derivative of $(s-1)\zeta(s)$, 
although that does not mean they are not of current or future interest. For example, we require such 
second-derivative bounds to prove Theorem \ref{Theorem:FirstDerivative} above.

\begin{theo}\label{Theorem:DoublePrime}
Let $s\in [1,4]$. Then
\begin{equation*}
\scalebox{1.25}{$
\frac{-2\gamma_1(1-\gamma)^2}{(1-\gamma - 2\gamma_1(s-1))^2}
$}
\lsl
[(s-1)\zeta(s)]''
\lsl
\scalebox{1.25}{$
-\frac{2\gamma_1-\frac{1}{4}(1 - \gamma)(s-1) - \frac{1}{4}\gamma_1 (s-1)^2}{(1 + \frac{1}{8}(s-1)^2)^2}.
$}
\end{equation*}
\end{theo}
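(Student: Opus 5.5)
We will work with the Laurent expansion at $s=1$. Writing $\zeta(s)=\frac{1}{s-1}+\sum_{n\gsl0}\frac{(-1)^n\gamma_n}{n!}(s-1)^n$, we get
\begin{equation*}
f(s):=(s-1)\zeta(s)=1+\sum_{n\gsl0}\frac{(-1)^n\gamma_n}{n!}(s-1)^{n+1}.
\end{equation*}
This series is entire, so
\begin{equation*}
f''(s)=\sum_{n\gsl1}\frac{(-1)^n\gamma_n (n+1)}{(n-1)!}(s-1)^{n-1}=-2\gamma_1+3\gamma_2(s-1)-2\gamma_3(s-1)^2+\cdots .
\end{equation*}
Both proposed bounds are explicit rational functions $L(s)$ and $U(s)$ of $x=s-1$. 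A direct check gives $L(1)=U(1)=-2\gamma_1=f''(1)$. So the statement is an inequality between a power series and two rational functions on the compact interval $x\in[0,3]$.

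The plan is to reduce each inequality to a polynomial-times-series inequality. Since the denominators are positive squares on $[0,3]$ (note $\gamma_1<0$, so $1-\gamma-2\gamma_1x>0$), it suffices to show
\begin{equation*}
f''(s)\,(1-\gamma-2\gamma_1x)^2+2\gamma_1(1-\gamma)^2\gsl0
\end{equation*}
and
\begin{equation*}
f''(s)\,(1+\tfrac18x^2)^2+2\gamma_1-\tfrac14(1-\gamma)x-\tfrac14\gamma_1x^2\lsl0 .
\end{equation*}
Each left side is an explicit power series in $x$ whose low-order coefficients are combinations of $\gamma,\gamma_1,\dots,\gamma_5$. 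The constant terms vanish by design. I will compute the first few coefficients numerically using the known values of the Stieltjes constants.

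For the tail I will use an explicit bound on $|\gamma_n|$ from Section~\ref{Section:Stieltjes}, for instance of Berndt or Matsuoka type: $|\gamma_n|\lsl n!/2^{n+1}$ or a sharper $|\gamma_n|\lsl 10^{-4}e^{n\log\log n}$. This controls $\sum_{n>N}\frac{(n+1)|\gamma_n|}{(n-1)!}x^{n-1}$ uniformly on $[0,3]$. Because $\gamma_n/n!$ decays roughly like $(\text{const})^{-n}$ with radius of convergence infinite, a truncation at moderate $N$ (perhaps $N\approx 15$–$20$) should leave a tail far below the polynomial margins for $x\lsl3$.

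With $P_N$ denoting the truncated polynomial, each inequality becomes "$P_N(x)\gsl \text{tail bound}(x)$" on $[0,3]$. I will verify this by splitting $[0,3]$ into a few subintervals. On each subinterval the polynomial is bounded below using interval arithmetic or Sturm-type sign checks, with care near $x=0$ where the difference vanishes.

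The main obstacle is the behaviour near $x=0$, where both sides vanish. There I will factor out the leading power of $x$ and check the sign of the next coefficient. For the lower bound this coefficient involves $3\gamma_2$ versus terms from expanding $L$. For the upper bound the linear coefficient of $U$ is $\frac14(1-\gamma)>0$ while $f''$ has linear coefficient $3\gamma_2\approx -0.03$, so the margin is of order $x$ and comfortable. For the lower bound the first-order terms are designed to match: the rational function $L$ is the derivative of the Padé-like form in Theorem~\ref{Theorem:FirstDerivative}, so $L'(1)=\frac{-8\gamma_1^2}{1-\gamma}$. I must check that $3\gamma_2$ exceeds or falls short of this appropriately, likely leaving the margin at order $x^2$. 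If it is too tight, I will handle a tiny interval $[0,\delta]$ by a Taylor-remainder argument using the bound on $|f^{(4)}|$ from the same Stieltjes estimates.

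For the larger $x$ up to $3$, the difficulty is that $U$ and $L$ may approach $f''$. There I will rely on direct numerical enclosure of $f''(s)$ itself, via Euler–Maclaurin for $\zeta,\zeta',\zeta''$ with explicit remainder, as a cross-check for the truncated series.
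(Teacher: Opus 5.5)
Your plan is the paper's proof: expand $[(s-1)\zeta(s)]''=\sum_{n\ge1}\frac{(-1)^n(n+1)\gamma_n}{(n-1)!}(s-1)^{n-1}$, truncate, bound the tail by an explicit estimate for $|\gamma_n|$, and verify the resulting polynomial inequalities on $[1,4]$. What would not go through as you wrote it is the quantitative tail control. The bound $|\gamma_n|\le n!/2^{n+1}$ you quote gives only the majorant $\sum_n \frac{n(n+1)}{2^{n+1}}(s-1)^{n-1}$, which diverges for $s\ge3$, so it cannot cover $[3,4]$. What matters is not that the series is entire, but that you have an explicit majorant that is geometric with ratio well below $1$ at $s=4$. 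Lemma~\ref{Lemma:ZhangWilliams} provides exactly this: $|\gamma_n|/(n-1)!\le4\sqrt2\,(2/(\pi e))^n$, with ratio $2(s-1)/(e\pi)\le 6/(e\pi)\approx0.70$.

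The truncation level is also off. The tightest point is $s=4$ on the upper side. There the right-hand bound is about $0.0662$, while $[(s-1)\zeta(s)]''=2\zeta'(4)+3\zeta''(4)\approx0.0574$, a margin below $0.009$. With the Zhang--Williams majorant, the tail at $s=4$ drops below this only for $N$ around $29$; the paper takes $N=30$, giving a tail bound of about $6\cdot10^{-3}$. With Matsuoka's bound, the single majorant term $n=21$ is already about $0.045$ at $s=4$. So $N\approx15$--$20$ is not enough with either estimate.

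No special treatment near $s=1$ is needed. Your expectation that the lower bound matches to first order there is mistaken. The bounds in Theorem~\ref{Theorem:FirstDerivative} match $[(s-1)\zeta(s)]'$ to first order, and that forces only a zeroth-order match for the second derivative. Both margins therefore grow linearly. The slopes are $3\gamma_2+8\gamma_1^2/(1-\gamma)\approx0.071$ for the lower bound and $\frac14(1-\gamma)-3\gamma_2\approx0.135$ for the upper bound, while the tail bound is $O((s-1)^{N-1})$. So the Taylor-remainder patch and the Euler--Maclaurin cross-check are unnecessary. Once the tail estimate and $N$ are fixed as above, your argument is exactly the paper's.
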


The proof of the previous result is in Section \ref{Section:DoublePrime}.
One could potentially improve upon the bounds above, which agree with $[(s-1)\zeta(s)]''$ to the zeroth
order at $s=1$. One could also pursue
bounds with a greater range of applicability. However, the small $s>1$ regime is of primary importance
and our main application of Theorem \ref{Theorem:DoublePrime} is to the proof of Theorem
\ref{Theorem:FirstDerivative}, so we do not pursue such improvements here.


\subsection{Bounds on \texorpdfstring{$(\zeta'/\zeta)(s)$}{zeta'(s)/zeta(s)}}
In the classical proofs of zero-free regions for the zeta function,
one has to rely on a lower bound on $(\zeta'/\zeta)(s)+(s-1)^{-1}$ for $s>1$. For example, 
in~\cite[Lem.~70.1]{Hall-Tenenbaum:Divisors} Hall and Tenenbaum prove that
\begin{equation*}
 \frac{\zeta'}{\zeta}(s)+\frac{1}{s-1} > 0.
\end{equation*}
A significant improvement, established by Delange \cite{Delange:Remarque} in 1987, is
\begin{equation}\label{eq:DelangeZ'Z}
 \frac{\zeta'}{\zeta}(s)+\frac{1}{s-1} > \frac{1}{2s^2}.
\end{equation}
This was improved in 2021 by Alzer and Kwong \cite[Thm.~1.1]{AlzerKwong}, who obtained the lower bound
$\gamma/s$ in \eqref{eq:AlzerKwong}. Hilberdink increased this lower bound to 
\begin{equation}\label{eq:Hilberdink}
 \frac{\zeta'}{\zeta}(s)+\frac{1}{s-1} > \frac{\gamma}{1+\gamma(s-1)}    
\end{equation}
for $s>1$ in 2023 \cite[eq.~(1.5)]{Hilberdink}. Then Leong proved in 2026 that
\begin{equation*}
 \frac{\zeta'}{\zeta}(s) + \frac{1}{s-1} > -\frac{\phi_1(s,k)}{s-1}
\end{equation*}
for all $1<s<\sigma_k$, in which $k\gsl 1$ is an integer and $\sigma_k\in(1,2)$; both $\sigma_k$ 
and $-\phi_1(s,k)$ are increasing in $k$ ~\cite{Leong:ExplicitEstimates}. The function $\phi_1(s,k)$ 
depends upon another function $\phi_0(s,k)$, so we refer the reader to \cite{Leong:ExplicitEstimates} 
for further details.

The proof of the next theorem is in Section \ref{Section:Combo}.

\begin{theo}\label{Theorem:Combo}
Let $c = 2\gamma - 2\gamma_1 - \gamma^2 \approx 0.966885$.
For $s>1$,
\begin{equation*}
\frac{\gamma + c(s-1)}{1 + 2(s-1)+c(s-1)^2}
< \frac{\zeta'}{\zeta}(s) + \frac{1}{s-1} 
< \frac{\gamma}{1+(\gamma+2\gamma_1/\gamma)(s-1)}.
\end{equation*}
\end{theo}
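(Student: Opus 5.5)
Write $F(s)=(s-1)\zeta(s)$, so that
\[
\frac{\zeta'}{\zeta}(s)+\frac{1}{s-1}=\frac{F'(s)}{F(s)}.
\]
The plan is to bound this quotient by combining the bounds on $F$ from Theorem~\ref{Theorem:JustZeta} with those on $F'$ from Theorem~\ref{Theorem:FirstDerivative}, with a separate treatment of large $s$.

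Near $s=1$ we have $F(s)=1+\gamma(s-1)-\gamma_1(s-1)^2+\cdots$, hence
\[
F'/F=\gamma+(-2\gamma_1-\gamma^2)(s-1)+O((s-1)^2).
\]
Both sides of the claimed inequality agree with this to first order at $s=1$. Expanding the lower bound gives $\gamma+(c-2\gamma)(s-1)$, and $c-2\gamma=-2\gamma_1-\gamma^2$. Expanding the upper bound gives $\gamma-\gamma(\gamma+2\gamma_1/\gamma)(s-1)$, which is the same. So any naive combination that loses first-order information fails near $s=1$. The approach is therefore to write $F'/F-R(s)$, where $R$ is the rational bound, and show it has a fixed sign.

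\textbf{Upper bound.} I would use $F'\le 1-\frac{1-\gamma+2\gamma_1(s-1)}{1+\frac18(s-1)^2}$ from Theorem~\ref{Theorem:FirstDerivative}, together with the lower bound on $F$ from Theorem~\ref{Theorem:JustZeta}, whenever that lower bound is positive. The claim then reduces to a polynomial inequality in $t=s-1>0$ after clearing denominators. Because of the first-order matching, the polynomial should vanish to order two at $t=0$. I would factor out $t^2$ and check positivity of the cofactor using the numerical values of $\gamma,\gamma_1$, for example by Sturm sequences or by exhibiting nonnegative coefficients after a substitution. If this only works on a bounded range $1<s\le s_0$, then for $s>s_0$ I would argue directly. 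There $F'/F=\frac1{s-1}-\sum\Lambda(n)n^{-s}/(\ldots)$ in the form $\frac{1}{s-1}+\frac{\zeta'}{\zeta}(s)$, and we have $\zeta'/\zeta(s)\le -\log 2\,\cdot 2^{-s}/(1+2^{1-s}\ldots)$, or more simply $-\zeta'/\zeta(s)\ge \log2\,2^{-s}$. Comparing this with the upper bound, which behaves like $\gamma^2/((\gamma^2+2\gamma_1)s)$, is a one-variable check.

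\textbf{Lower bound.} This is symmetric. I would use the lower bound on $F'$ from Theorem~\ref{Theorem:FirstDerivative} and the upper bound on $F$ from Theorem~\ref{Theorem:JustZeta}, then clear denominators and verify positivity of the resulting polynomial divided by $t^2$ on a range $(0,t_0]$. For large $s$ the target lower bound decays like $1/s$. The comparison there uses $-\zeta'/\zeta(s)\le \sum_{n\ge2}\log n\,n^{-s}\le \log2\,2^{-s}+\int_2^\infty \log x\,x^{-s}\,dx$. This gives $F'/F\ge \frac1{s-1}-(\text{something exponentially small or }O(s^{-1}2^{-s}))$, and that should dominate $\frac{c}{cs}\approx \frac1{s}$, since $\frac1{s-1}-\frac1s\approx s^{-2}$ beats the exponentially small term for $s$ large.

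\textbf{Main obstacle.} The hard part is the intermediate range, roughly $1.5\le s\le 5$. There the Theorem~\ref{Theorem:JustZeta}/\ref{Theorem:FirstDerivative} bounds lose precision, and the upper bound in Theorem~\ref{Theorem:FirstDerivative} grows relative to the truth. Moreover, the lower bound for $F$ in Theorem~\ref{Theorem:JustZeta} becomes weak compared with \eqref{eq:SLB2}. I would first try replacing the $F$-bounds by the sharper \eqref{eq:SLB2} on that range. If the gap still does not close, I would fall back on a rigorous interval-arithmetic check of $F'/F-R$ on a finite grid, using a Lipschitz bound for the derivative obtained from Theorem~\ref{Theorem:DoublePrime} and its analogue.
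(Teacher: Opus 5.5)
\textbf{The lower bound is fine, but the upper bound has a genuine gap near $s=1$.} Your lower-bound argument is the paper's: divide the lower bound for $F'$ from Theorem~\ref{Theorem:FirstDerivative} by the upper bound for $F$ from Theorem~\ref{Theorem:JustZeta}. The resulting rational function exceeds $\frac{\gamma+ct}{1+2t+ct^2}$ for every $t=s-1>0$, so you do not even need a separate large-$s$ step.

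The upper bound is where the proposal breaks, and it breaks as $s\to1^+$, not in the intermediate range you flagged. First-order agreement is automatic, so everything hinges on the $t^2$ coefficients, and there your ingredients are far too coarse. The lower bound for $F$ in Theorem~\ref{Theorem:JustZeta} expands as $1+\gamma t-\gamma t^2+\cdots$, against $F=1+\gamma t-\gamma_1 t^2+\cdots$. The upper bound for $F'$ in Theorem~\ref{Theorem:FirstDerivative} expands as $\gamma-2\gamma_1 t+\frac{1-\gamma}{8}t^2+\cdots$, against $F'=\gamma-2\gamma_1t+\frac32\gamma_2t^2+\cdots$. Hence
\begin{equation*}
\frac{\overline{F'}}{\underline{F}}
=\gamma-(\gamma^2+2\gamma_1)t+\Bigl(\frac{1-\gamma}{8}+2\gamma\gamma_1+\gamma^3+\gamma^2\Bigr)t^2+\cdots
\approx \gamma-0.1875\,t+0.494\,t^2+\cdots .
\end{equation*}
The target $\gamma/(1+(\gamma+2\gamma_1/\gamma)t)$ has $t^2$-coefficient $(\gamma^2+2\gamma_1)^2/\gamma\approx 0.061$. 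The true coefficient of $F'/F$ is $\frac32\gamma_2+3\gamma\gamma_1+\gamma^3\approx0.052$.

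Consequences:
\begin{itemize}
\item After clearing denominators, ``target minus quotient'' equals $(0.061-0.494)\,t^2+O(t^3)$ times a positive factor. So the polynomial inequality you plan to verify is false for small $t>0$.
\item It does not recover later: at $s=2$ your quotient is about $0.490$, the target about $0.436$, and $1+\zeta'(2)/\zeta(2)\approx0.430$.
\item Even with $F$ known exactly, the $F'$ bound alone has $t^2$-error $\frac{1-\gamma}{8}-\frac32\gamma_2\approx0.067$. This exceeds the available slack of about $0.009$.
\item Switching to \eqref{eq:SLB2} loses first-order agreement, so it does not help.
\item A grid/Lipschitz check cannot certify a neighborhood of $s=1$, because the target minus $F'/F$ behaves like $0.009\,t^2$ there.
\end{itemize}

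\textbf{What is missing} is an argument that is exact through order $t^2$ at $s=1$. The paper gets one from the exact identity
\begin{equation*}
\frac{\zeta'}{\zeta}(s)+\frac{1}{s-1}
=\frac{\gamma}{A}+\frac{f(s)+(s-1)g(s)}{A\,(s-1)\zeta(s)},
\qquad A=1+\Bigl(\gamma+\frac{2\gamma_1}{\gamma}\Bigr)(s-1).
\end{equation*}
Here $f(s)=sJ_0(s)-\gamma+(s-1)\gamma_1$ with $J_0(s)=\int_1^\infty(1-\{x\})x^{-s-1}\,\dd x$. The function $g$ is an explicit combination of $J_0$ and the integrals $\int_1^\infty\{x\}x^{-s-1}\log^k x\,\dd x$ for $k=0,1$. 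Both $f$ and $g$ vanish at $s=1$.

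The paper then shows, on $1<s<5$:
\begin{itemize}
\item $f<0$, via $f(1)=f'(1)=0$ and $f''<0$;
\item $g<0$, via $g(1)=0$ and $g'<0$.
\end{itemize}
The signs of $f''$ and $g'$ come from integral representations whose integrands have a fixed sign for $x\ge8$, plus an explicit finite computation. For $s\ge5$ it uses only $\zeta'/\zeta<0$ and $\frac{1}{s-1}\le\frac{\gamma}{A}$; your own large-$s$ remark is compatible with this.

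Alternatively, you could handle a neighborhood of $s=1$ by a high-order truncated Stieltjes expansion of $F$ and $F'$ with the tail bound of Lemma~\ref{Lemma:ZhangWilliams}. Either way, Theorems~\ref{Theorem:JustZeta} and~\ref{Theorem:FirstDerivative} are too coarse at order $t^2$ to yield the upper bound.
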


The lower bound of Theorem \ref{Theorem:Combo} improves upon Hilberdink's bound \eqref{eq:Hilberdink}.
Leong's $-\frac{\phi_1(s,3)}{s-1}$ bound is weaker than our lower bound, which is also simpler, valid for 
any $s>1$, and has the correct asymptotic behavior as $s\to\infty$. 
We chose $k=3$ here, but any larger $k$ does not make any difference.
One can take $\gamma$ itself as an upper bound.
For $s > 1 - \frac{\gamma}{2\gamma_1} \approx 4.964$, the upper bound $(s-1)^{-1}$ is stronger than the 
bound of Theorem \ref{Theorem:Combo}, so the advantage in our upper bound is in the (more important) small 
$s>1$ regime.

Since 
\begin{equation*}
 \lim_{s\to1^+} \bigg(\frac{\zeta'}{\zeta}(s) - \frac{1}{s-1} \bigg) = \gamma,
\end{equation*}
the next theorem, whose proof is in Section \ref{Section:Decreasing}, 
immediately yields the (weak) upper bound $\gamma$.
Nevertheless, it establishes something different since upper bounds alone cannot tell us whether a function
is decreasing or not.

\begin{theo}\label{Theorem:Decreasing}
$\displaystyle \frac{\zeta'}{\zeta}(s)+\frac{1}{s-1}$ is decreasing on $s>1$.
\end{theo}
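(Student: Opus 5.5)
We need to show that $F(s)=\frac{\zeta'}{\zeta}(s)+\frac{1}{s-1}$ is decreasing on $s>1$, i.e. $F'(s)<0$. Writing $g(s)=(s-1)\zeta(s)$, we have $F=g'/g=(\log g)'$. So the claim is that $\log g$ is concave on $(1,\infty)$, equivalently $g g''<(g')^2$.

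The most natural route avoids numerical bounds entirely and uses a positive-definite representation. Since $\zeta(s)=\frac{1}{\Gamma(s)}\int_0^\infty \frac{x^{s-1}}{e^x-1}\,\dd x$ and $\frac{1}{s-1}=\frac{1}{\Gamma(s)}\int_0^\infty x^{s-2}e^{-x}\,\dd x$, I will write
\begin{equation*}
-F(s) = \sum_{n\gsl1}\frac{\Lambda(n)}{n^s}-\frac{1}{s-1}.
\end{equation*}
I will then look for a representation of this difference as $\int_0^\infty e^{-(s-1)t}\,\dd\mu(t)$ with $\mu$ a signed measure. The candidate comes from $\sum\Lambda(n)n^{-s}-\int_1^\infty x^{-s}\,\dd x=\int_1^\infty x^{-s}\,\dd(\psi(x)-x)$.

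If that works, $F'(s)=\int t e^{-(s-1)t}\,\dd\mu$. This has no obvious sign because $\psi(x)-x$ oscillates, so this route looks unpromising. Instead I will use the log-concavity formulation directly. Set $u=s-1>0$. Then
\begin{equation*}
g(s)=u\zeta(1+u)=\frac{u}{\Gamma(1+u)}\int_0^\infty \frac{x^{u}}{e^x-1}\,\dd x=\frac{1}{\Gamma(u)}\int_0^\infty \frac{x^u}{e^x-1}\,\dd x .
\end{equation*}
Hence $\log g=-\log\Gamma(u)+\log M(u)$ with $M(u)=\int_0^\infty x^u\frac{\dd x}{e^x-1}$. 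By Cauchy--Schwarz, $\log M$ is convex, and $-\log\Gamma$ is concave, so the two compete.

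A better normalization is $M(u)=\int_0^\infty x^{u}e^{-x}\,\frac{x}{e^x-1}\cdot\frac{e^x}{x}\,\dd x$, that is, $g(s)=\frac{1}{\Gamma(u)}\int_0^\infty x^{u-1}e^{-x}\,\frac{x}{1-e^{-x}}\,\dd x$. So $g(1+u)=\mathbb{E}_u\big[h(X)\big]$, where $X\sim\mathrm{Gamma}(u,1)$ and $h(x)=\frac{x}{1-e^{-x}}$. Then
\begin{equation*}
(\log g)'(s)=\frac{\dd}{\dd u}\log \mathbb{E}_u h(X).
\end{equation*}
The key step is to show this derivative is decreasing in $u$. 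Note that $h$ is increasing with $h'\in(0,1)$, $h$ is convex, and $h(x)-x=\frac{x}{e^x-1}$ is decreasing and convex. Writing $h(x)=x+k(x)$ with $k(x)=x/(e^x-1)$, we get $\mathbb{E}_uX=u$. Moreover $\mathbb{E}_u k(X)=\frac{1}{\Gamma(u)}\int_0^\infty x^u e^{-x}/(e^x-1)\,\dd x=u\big(\zeta(u+1)-1\big)\cdot\frac{\Gamma(u+1)}{u\Gamma(u)}$, which just recovers $g=u+u(\zeta(1+u)-1)$. So everything reduces to proving that $\log\bigl(u+u\sum_{n\gsl2}n^{-1-u}\bigr)$ is concave.

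I will do this by hand. Write $g=u\,A(u)$ with $A(u)=1+\sum_{n\gsl2}n^{-1-u}=\zeta(1+u)$. Then $(\log g)''=-u^{-2}+(\log\zeta)''(1+u)$. Here $(\log\zeta)''(s)=\sum\Lambda(n)\log n\,n^{-s}$, and the needed inequality becomes
\begin{equation*}
\sum_{n\gsl2}\frac{\Lambda(n)\log n}{n^{s}}<\frac{1}{(s-1)^2}=\int_1^\infty\frac{\log x}{x^{s}}\,\dd x .
\end{equation*}
This is the main obstacle, since it is Chebyshev-type rather than formal. I would first try partial summation with $\psi$ and a short-range explicit bound. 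Since $(\log\zeta)''=(\zeta''\zeta-\zeta'^2)/\zeta^2$, an alternative is to verify it via the representations $\zeta^{(k)}$ as Mellin integrals against $1/(e^x-1)$.

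Failing a clean argument, I will split the range. For $s\in(1,4]$, the lower bound of Theorem \ref{Theorem:DoublePrime} and the bounds of Theorems \ref{Theorem:JustZeta} and \ref{Theorem:FirstDerivative} on $g,g',g''$ give $gg''-(g')^2<0$ directly; this reduces to checking rational-function inequalities, with care needed near $s=1$, where both sides vanish to first order. For $s\gsl4$, the sum is dominated by its first terms, $\frac{\log^2 2}{2^s}+\frac{\log^23}{3^s}+\dots$, which is easily below $(s-1)^{-2}$.
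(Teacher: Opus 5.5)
Your actual argument is the last paragraph. The signed-measure and Gamma-mixture detours only restate $F'<0$ as log-concavity of $g(s)=(s-1)\zeta(s)$, i.e.\ $gg''<(g')^2$.

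\textbf{The range $s\ge 4$.} Here you follow the paper. However, ``dominated by its first terms'' still has to become a tail estimate. The paper's chain
$\sum_n\Lambda(n)\log n\,n^{-s}\le\sum_n\log^2 n\,n^{-s}\le\frac{4}{(es)^2}+\frac{2}{(s-1)^3}\le\frac{1}{(s-1)^2}$
does this, with little room at $s=4$ ($0.108$ versus $0.111$).

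\textbf{The range $1<s\le 4$.} Here your route is genuinely different. The paper uses the Hadamard product: $F'(s)=-\sum_{k\ge1}(s+2k)^{-2}-\sum_\rho(s-\rho)^{-2}$. Because $|\Imm\rho|>14$, each paired zero term is monotone in $s$. So the zero sum is at most its value at $s=1$, namely $\frac{\pi^2}{8}-1-\gamma^2-2\gamma_1\approx0.046$, while the first sum exceeds $0.098$.

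You instead read off $gg''<(g')^2$ from Theorems~\ref{Theorem:JustZeta}, \ref{Theorem:FirstDerivative} and \ref{Theorem:DoublePrime}. This needs no information about zeros and makes the theorem a corollary of the earlier bounds. It works because the $g$-formulation replaces a comparison of two quantities of size $(s-1)^{-2}$ with one between analytic functions, which has margin $\gamma^2+2\gamma_1\approx0.19$ at $s=1$. The cost is that your route inherits the machine verifications behind those three theorems. The paper's argument on this range uses only the height of the first zero and the closed form of $\sum_\rho\rho^{-2}$.

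\textbf{Two corrections to the step on $(1,4]$.}

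First, you need the \emph{upper} bound of Theorem~\ref{Theorem:DoublePrime}; a lower bound on $g''$ cannot bound $gg''$ from above. With $t=s-1$, set
\[
u_0=s-(1-\gamma)\frac{t}{s}-\frac{t^2}{3s^2},\qquad
\ell_1=1-\frac{(1-\gamma)^2}{1-\gamma-2\gamma_1 t},\qquad
u_2=-\frac{2\gamma_1-\frac14(1-\gamma)t-\frac14\gamma_1 t^2}{(1+\frac18 t^2)^2}.
\]
Then $g\le u_0$, $g'\ge\ell_1>0$ and $g''\le u_2$. Hence $gg''\le\max(0,u_0u_2)$, and it suffices that $u_0u_2<\ell_1^2$ for $0\le t\le 3$. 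The lower bound on $g''$ is not needed at all.

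Second, your worry that ``both sides vanish to first order'' at $s=1$ is unfounded. At $s=1$ we have $g=1$, $g'=\gamma$ and $g''=-2\gamma_1$. The three bounds are exact there, and $u_0u_2\approx0.146<\ell_1^2\approx0.333$. Numerically:
\begin{itemize}
\item at $t=1$: $u_0u_2\approx0.314$ against $\ell_1^2\approx0.470$;
\item at $t=3$: $u_0u_2\approx0.231$ against $\ell_1^2\approx0.627$;
\item $\ell_1^2-u_0u_2$ stays above about $0.15$ on all of $[0,3]$.
\end{itemize}
So the rational inequality holds comfortably, and once these corrections are made your argument goes through.
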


\subsection{Bounds on \texorpdfstring{$\zeta'(s)$}{zeta'(s)}}
The derivative of the zeta function on $s>1$ has been the focus of interest over the years, with a series 
of authors obtaining explicit bounds. For example, Leong \cite[Lem.~11]{Leong:ExplicitEstimates} showed that 
$\zeta'(s) + (s-1)^{-2} > -(s-3)^{-2}$ for $1 < s < 3$.
Hilberdink showed that $\zeta'(s) + (s-1)^{-2} > 0$ \cite{Hilberdink}, although Alzer and Kwong obtained 
$\zeta'(s) + (s-1)^{-2} > \frac{3}{50}$ for $1 < s \lsl 1.1$ \cite[Lem.~2.1]{AlzerKwong} shortly before. 
Upper bounds in this context do not appear to be as well studied as lower bounds.

Our next result bounds $\zeta'(s)+(s-1)^{-2}$ on $s>1$ from above and below by rational functions whose
Taylor series at $s=1$ begin with $-\gamma_1 + \gamma_2(s-1)$; the upper bound below matches to second order. 
They hold for all $s>1$ and improve upon the lower bounds of Alzer--Kwong, Hilberdink, and Leong.

\begin{theo}\label{Theorem:ZetaPrime}
For $s>1$,
\begin{equation*}
\frac{\alpha}{ 1 + \beta (s-1) + \alpha (s-1)^2 }
< \zeta'(s) + \frac{1}{(s-1)^2}
< \frac{ \alpha}{1+ \beta(s-1) +\delta(s-1)^2},
\end{equation*}
in which $\alpha = -\gamma_1 \approx 0.072816$,
$\beta = \frac{\gamma_2}{\gamma_1} \approx 0.133080$, and
$\delta = ( \frac{\gamma_2}{\gamma_1})^2 - \frac{ \gamma_3}{2 \gamma_1} \approx  0.031813$.
\end{theo}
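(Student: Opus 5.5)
Set $f(s)=\zeta'(s)+(s-1)^{-2}$, so that $f(s)=-\gamma_1+\gamma_2(s-1)-\tfrac12\gamma_3(s-1)^2+\cdots$ near $s=1$ by the Laurent expansion of Section \ref{Section:Stieltjes}. Since $f>0$ on $s>1$ (Hilberdink), it is natural to work with the reciprocal $g(s)=\alpha/f(s)$. The two claimed inequalities are then equivalent to
\begin{equation*}
1+\beta(s-1)+\delta(s-1)^2 < g(s) < 1+\beta(s-1)+\alpha(s-1)^2 .
\end{equation*}
With $t=s-1$, the expansion of $f$ gives $g(s)=1+\beta t+\delta t^2+O(t^3)$. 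This is exactly how $\beta$ and $\delta$ were chosen, and it accounts for the second-order contact of the upper bound on $f$. As $s\to\infty$ we have $f(s)\sim t^{-2}$, so $g(s)\sim \alpha t^2$; this is why $\alpha$ is the leading coefficient in the lower bound on $f$. Hence I would study the function
\begin{equation*}
h(t)=\frac{g(s)-1-\beta t}{t^2}
\end{equation*}
on $t>0$, which tends to $\delta$ as $t\to0^+$ and to $\alpha$ as $t\to\infty$. The theorem is equivalent to $\delta<h(t)<\alpha$, and the cleanest route is to show that $h$ is increasing.

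The analytic input is an integral representation of $f$. I would use the Euler–Maclaurin / Abel–Plana type formula
\begin{equation*}
\zeta(s)=\frac{1}{s-1}+\frac12+s\int_1^\infty \frac{\tfrac12-\{x\}}{x^{s+1}}\,\dd x
\end{equation*}
and differentiate it. Alternatively, $f(s)=-\sum_{n}\log n\,n^{-s}+\int_1^\infty \log x\,x^{-s}\,\dd x$, and the difference of sum and integral can be treated via convexity of $x\mapsto \log x\, x^{-s}$.

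My plan is to split $t$ into ranges.
\begin{enumerate}
\item For large $s$, say $s\gsl s_0$ with $s_0$ around $4$ or $5$, write $f(s)=t^{-2}-\log 2\cdot 2^{-s}-\cdots$. Elementary tail bounds then show $f$ lies strictly between the two rational functions, because $(1+\beta t+\delta t^2)^{-1}$ and $(1+\beta t+\alpha t^2)^{-1}$ differ from $\alpha^{-1}t^{-2}$ by terms of size $t^{-3}$, while $\zeta'(s)$ is exponentially small.
\item For $t$ small, use the Taylor expansion with an explicit remainder, controlled by known explicit bounds on the Stieltjes constants $|\gamma_n|$ (e.g.\ Berndt/Matsuoka-type bounds). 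This shows $h(t)-\delta$ is positive, of size about $c\,t$. This requires the coefficient of $t^3$ in $g$ to have the right sign, which I would check numerically from $\gamma_1,\dots,\gamma_4$.
\item On the remaining compact interval, verify the strict inequalities by interval arithmetic. Since both sides are monotone rational functions and $f$ is decreasing, a finite grid with derivative bounds suffices.
\end{enumerate}

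The main obstacle is the lower bound on $g$ (the upper bound on $f$) near $s=1$, because it has second-order contact there. The margin is only $O(t^3)$, so both the small-$t$ remainder estimate and the numerical verification near the left end of the compact range must be done carefully. I would first try to prove directly that $t^3\,(h(t)-\delta)$, rewritten via the integral representation, is a positive integral. If that fails, I would fall back on the Taylor-remainder-plus-interval-arithmetic scheme above.
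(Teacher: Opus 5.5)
Your fallback plan is, in substance, the paper's proof. On $0\lsl t\lsl 1/2$ the paper sandwiches $\zeta'(1+t)+t^{-2}$ between Taylor polynomials in the Stieltjes constants, with the remainder controlled by Lemma~\ref{Lemma:ZhangWilliams}; this is your step~2, including the third-order sign check. For $t\gsl 1/2$ it uses your second representation. Convexity of $x\mapsto x^{-1-t}\log x$ on $[9/2,\infty)$ gives closed-form bounds $A_N(t)\lsl \zeta'(1+t)+t^{-2}\lsl B_N(t)$ (midpoint rule and trapezoidal rule, respectively), and $A_7$, $B_{83}$ are compared with the two rational functions on all of $[1/2,\infty)$. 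That single comparison replaces both your interval-arithmetic middle range and your large-$s$ step. It also avoids the monotonicity of $f$, which you use in step~3 without proof.

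The justification you give for step~1 is wrong, however, and it affects where $s_0$ can go.
\begin{itemize}
\item Only the lower rational function is $t^{-2}+O(t^{-3})$. The upper one satisfies $\alpha/(1+\beta t+\delta t^2)\sim(\alpha/\delta)t^{-2}\approx 2.29\,t^{-2}$.
\item Since $\zeta'<0$, the upper bound is automatic only when $(\alpha-\delta)t^2>1+\beta t$, i.e.\ for $s\gtrsim 7.8$. Below that it requires a \emph{lower} bound on $-\zeta'(s)$, with little slack. At $s=4$ you need $-\zeta'(4)>1/9-\alpha/(1+3\beta+9\delta)\approx 0.0679$, whereas $-\zeta'(4)\approx 0.0689$.
\item For the lower bound at $s=4$, $-\zeta'(4)$ already uses about $91\%$ of the available gap $1/9-\alpha/(1+3\beta+9\alpha)\approx 0.0757$.
\end{itemize}
So ``$\zeta'(s)$ is exponentially small'' proves nothing at $s_0\approx 4$ or $5$. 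There you need genuine two-sided numerical bounds on $\zeta'(s)$, for example partial sums of $\sum\log n\,n^{-s}$ with integral tails. Alternatively, push $s_0$ past $7.8$ and enlarge the compact range.

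Relatedly, the tightness of the upper bound on $f$ is not confined to a neighbourhood of $s=1$. Its absolute slack is about $7\times10^{-6}$ at $t=1/2$ (relative size about $10^{-4}$) and about $6\times10^{-5}$ at $t=1$. This is why the paper's trapezoidal bound needs as many as $83$ terms. With these corrections your scheme goes through.
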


Simpler, but weaker, bounds that follow from the above are
\begin{equation*}
\frac{1}{14s^2}
< \zeta'(s) + \frac{1}{(s-1)^2}
< \frac{11}{151+(s-1)^2}.
\end{equation*}
The proof of Theorem \ref{Theorem:ZetaPrime} is in Section \ref{Section:ZetaPrime}.

\begin{acknowledgements}
LG and GM are members of the INdAM group GNSAGA. SRG is partially supported by NSF grant DMS-2452084.    
\end{acknowledgements}

\section{Stieltjes constants}\label{Section:Stieltjes}
The Laurent-series coefficients for $\zeta(s)$ at $s=1$ appear in some of our theorem statements and
proofs, so we require a few words about them. Since $\zeta(s)$ has a simple pole at $s=1$ with residue
$1$, the Laurent series has principal part $(s-1)^{-1}$. 
The \emph{Stieltjes constants} are implicitly defined by
\begin{equation}\label{eq:Stieltjes}
 \zeta(s) = \frac{1}{s-1} + \sum_{n=0}^{\infty}\frac{(-1)^n}{n!}\gamma_n(s-1)^n,
\end{equation}
in which the series converges for all complex $s$. In particular, $\gamma_0 = \gamma = 0.5772\ldots$ is 
the Euler--Mascheroni constant; see Table \ref{Table:Stieltjes} for numerical values of the first several 
Stieltjes constants. 
The values of $\gamma_n$ can be computed with PARI or with the tools in~\cite{AdellLeukona}. 
In \texttt{Mathematica}, the command \texttt{StieltjesGamma[$n$]} produces $\gamma_n$. 
Two web resources are \cite{lmfdb,Plouffe}.

\begin{table}
\begin{equation*}
\begin{array}{c|l}
n & \phantom{-}\gamma_n\\
\hline
 0 & \phantom{-}0.577215664901532860606512090082\ldots \\
 1 & -0.072815845483676724860586375874\ldots \\
 2 & -0.009690363192872318484530386035\ldots \\
 3 & \phantom{-}0.002053834420303345866160046542\ldots \\
 4 & \phantom{-}0.002325370065467300057468170177\ldots \\
 5 & \phantom{-}0.000793323817301062701753334877\ldots \\
\end{array}
\end{equation*}
\caption{The first several Stieltjes constants.}
\label{Table:Stieltjes}
\end{table}

Although only the first few Stieltjes constants appear in our theorem statements, the proofs sometimes 
involve bounds for all of the Stieltjes constants. An early result in this direction is due to Berndt, 
who proved that $|\gamma_n| \lsl (3+(-1)^n)/\pi^n$ for $n\gsl 1$ \cite{Berndt}. For our purposes we 
employ the bound
\begin{equation}\label{eq:ZhangWilliams}
 |\gamma_n| \lsl \frac{(3+(-1)^n)(2n)!}{n^{n+1}(2\pi)^n}
\end{equation}
of Zhang and Williams \cite{ZhangWilliams};
see \cite{Blagouchine, Blagouchine2, Matsuoka1, Matsuoka2} for other estimates.

\begin{lem}\label{Lemma:ZhangWilliams}
For $n\gsl 1$, we have $\displaystyle
 \frac{|\gamma_n|}{(n-1)!} 
 \lsl 4 \sqrt{2} \biggl( \frac{2}{\pi e} \biggr)^n$.
\end{lem}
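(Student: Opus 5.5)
We deduce the lemma from the Zhang--Williams bound \eqref{eq:ZhangWilliams} together with an explicit Stirling estimate. Dividing \eqref{eq:ZhangWilliams} by $(n-1)!$ and using $3+(-1)^n\lsl 4$ gives
\begin{equation*}
 \frac{|\gamma_n|}{(n-1)!} \lsl \frac{4\,(2n)!}{n^{n+1}(2\pi)^n (n-1)!} = \frac{4\,(2n)!}{n^{n}(2\pi)^n\, n!},
\end{equation*}
since $n\cdot(n-1)! = n!$. It therefore suffices to prove that $(2n)!/n! \lsl \sqrt{2}\,(4n/e)^n$ for every $n\gsl1$. Indeed, inserting this gives
\begin{equation*}
 \frac{|\gamma_n|}{(n-1)!} \lsl 4\sqrt{2}\,\frac{4^n n^n e^{-n}}{n^n (2\pi)^n} = 4\sqrt{2}\biggl(\frac{2}{\pi e}\biggr)^n,
\end{equation*}
which is exactly the claim.

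For the factorial ratio we use the explicit Stirling bounds (Robbins)
\begin{equation*}
 \sqrt{2\pi m}\,(m/e)^m e^{1/(12m+1)} < m! < \sqrt{2\pi m}\,(m/e)^m e^{1/(12m)},
\end{equation*}
valid for $m\gsl1$. The upper bound for $(2n)!$ and the lower bound for $n!$ give
\begin{equation*}
 \frac{(2n)!}{n!} < \sqrt{2}\,\biggl(\frac{4n}{e}\biggr)^{n} \exp\!\Bigl(\frac{1}{24n}-\frac{1}{12n+1}\Bigr).
\end{equation*}
The exponent is negative for all $n\gsl1$, because $12n+1<24n$. Hence $(2n)!/n! < \sqrt{2}(4n/e)^n$, which finishes the proof.

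If we wanted to avoid citing Robbins, we could instead prove the same inequality by induction. Set $a_n=(2n)!/(n!\,(4n/e)^n)$. Then
\begin{equation*}
 \frac{a_{n+1}}{a_n}=\frac{(2n+1)(2n+2)}{(n+1)}\cdot\frac{e}{4}\cdot\frac{n^n}{(n+1)^{n+1}} = \frac{e}{2}\cdot\frac{2n+1}{2n+2}\Bigl(1+\frac1n\Bigr)^{-n}.
\end{equation*}
This ratio tends to $1$ from above, so $a_n$ increases toward its limit $\sqrt2$. That again gives $a_n\lsl\sqrt2$, but showing the ratio exceeds $1$ needs the inequality $(1+1/n)^n< e(2n+1)/(2n+2)$, which is a mildly delicate step. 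We therefore prefer the route through Robbins' bounds. The only real obstacle is verifying this constant $\sqrt2$ sharply, and it is handled by the sign of $\frac1{24n}-\frac1{12n+1}$.
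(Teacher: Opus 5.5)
Your proof is correct, and your reduction to $(2n)!/(n!\,n^n)\lsl\sqrt2\,(4/e)^n$ is exactly the one the paper makes. The two arguments differ only in how that factorial inequality is proved.

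The paper stays fully elementary. It writes $(2n)!/(n!\,n^n)=\prod_{k=1}^n(1+k/n)$ and bounds $\sum_{k=1}^n\log(1+k/n)$ by $n\int_0^1\log(1+x)\,\dd x+\tfrac12\log 2$. This works because the trapezoidal rule underestimates the integral of the concave function $\log(1+x)$ on $[0,1]$.

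You instead cite Robbins' two-sided Stirling bounds and finish with the sign of $\frac{1}{24n}-\frac{1}{12n+1}$. Your bookkeeping is right: the prefactor is $\sqrt{4\pi n}/\sqrt{2\pi n}=\sqrt2$, and $(2n/e)^{2n}/(n/e)^n=(4n/e)^n$. Your route is shorter and shows where the constant $\sqrt2$ comes from, namely the ratio of the Stirling prefactors. The cost is importing an external result whose own proof is longer than the paper's concavity argument. The paper's route needs nothing beyond concavity and is self-contained.

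There is one small slip in your optional induction remark. The ratio is $a_{n+1}/a_n=e\cdot\frac{2n+1}{2n+2}\bigl(1+\frac1n\bigr)^{-n}$, not $\frac e2\cdot\frac{2n+1}{2n+2}\bigl(1+\frac1n\bigr)^{-n}$; as written it would tend to $1/2$, not $1$. The condition $(1+1/n)^n<e(2n+1)/(2n+2)$ that you state afterwards is the correct one for the corrected ratio. Since your proof does not rely on that remark, the slip does not affect it.
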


\begin{proof}
We first show that
\begin{equation}\label{eq:Troublesome}
 \frac{(2n)!}{n!n^n}\lsl \sqrt{2}\biggl(\frac{4}{e}\biggr)^n
\end{equation}
for all $n \geq 1$.
Since
\begin{equation*}
\frac{(2n)!}{n!n^n}
= \frac{(n+1)(n+2)\cdots(2n)}{n^n}
= \prod_{k=1}^n \bigg(1+\frac{k}{n}\bigg),
\end{equation*}
it suffices to prove that
\begin{equation*}
\sum_{k=1}^n \log\bigg(1+\frac{k}{n}\bigg)
\lsl n\log 4-n+\frac{1}{2}\log 2.
\end{equation*}
Since $f(x) = \log(1+x)$ is concave on $[0,1]$, the trapezoidal rule underestimates the integral. Thus,
\begin{equation*}
\frac{1}{n}\Bigg( \frac{f(0)}{2} + \sum_{k=1}^{n-1} f\bigg( \frac{k}{n} \bigg) + \frac{f(1)}{2}\Bigg)
\lsl \int_0^1 f(x)\dd x ,
\end{equation*}
and hence
\begin{equation*}
\sum_{k=1}^n f\bigg( \frac{k}{n} \bigg) \lsl  n\int_0^1 f(x)\dd x + \frac{f(1)-f(0)}{2}.
\end{equation*}
This is equivalent to the desired estimate since $f(0) = 0$, $f(1) =\log 2$, and $\int_0^1 f(x) \dd x = \log 4 - 1$.
Now use \eqref{eq:ZhangWilliams} and \eqref{eq:Troublesome} to obtain
\begin{equation*}
\frac{|\gamma_n|}{(n-1)!}
\lsl \frac{ 4(2n)!} {(n-1)!n^{n+1}(2\pi)^n}
= \frac{4}{(2\pi)^n}\cdot \frac{(2n)!}{n!n^n}
\lsl 4\sqrt{2} \bigg( \frac{2}{\pi e}\bigg)^n. \qedhere
\end{equation*}
\end{proof}

\section{Proof of Theorem \ref{Theorem:JustZeta}}\label{Section:JustZeta}
Recall that Theorem \ref{Theorem:JustZeta} concerns bounds on $(s-1)\zeta(s)$ for $s>1$. We consider $s > 3$ 
and $1 \lsl s \lsl 3$ separately. In what follows, $\ip{t}$ and $\fp{t}=t-\ip{t}$ denote the integer part and 
fractional part, respectively, of $t\in \RM$.

Suppose that $s > 3$. Since $\zeta(s) = \sum_{n=1}^\infty 1/n^s$, we get
\begin{align*}
\zeta(s)
&=  1 + \sum_{n=2}^\infty \frac{1}{n^s}
 =  1 + \int_{2^-}^\infty \frac{\dd \ip{x}}{x^s} \\
&=  1 - \frac{1}{2^s}  + s \int_{2}^\infty \frac{\ip{x}}{x^{s+1}} \dd x \\
&=  1 - \frac{1}{2^s}
      + s \int_{2}^\infty \frac{x}{x^{s+1}} \dd x
      - s \int_{2}^\infty \frac{\fp{x}}{x^{s+1}} \dd x  \\
&=  1 - \frac{1}{2^s}
      + \frac{2s}{s-1} \cdot \frac{1}{2^s}
      - s \int_{2}^\infty \frac{\fp{x}}{x^{s+1}} \dd x  \\
&=  1 + \frac{s+1}{s-1} \cdot \frac{1}{2^s}
      - s \int_{2}^\infty \frac{\fp{x}}{x^{s+1}} \dd x,
\end{align*}
and hence
\begin{equation*}
(s-1)\zeta(s) < s - 1 + \frac{s+1}{2^s}.
\end{equation*}
Similarly,
\begin{align*}
\zeta(s)
&=  1 + \frac{s+1}{s-1} \cdot \frac{1}{2^s}
      - s \int_{2}^\infty \frac{1/2}{x^{s+1}} \dd x
      - s \int_{2}^\infty \frac{\fp{x}-1/2}{x^{s+1}} \dd x \\
&=  1 + \frac{s+1}{s-1} \cdot \frac{1}{2^s}
      - \frac{1/2}{2^s}
      - s \int_{2}^\infty \frac{\fp{x}-1/2}{x^{s+1}} \dd x \\
&=  1 + \frac{s+3}{2s-2} \cdot \frac{1}{2^s}
      - s \int_{2}^\infty \frac{\fp{x}-1/2}{x^{s+1}} \dd x ,
\end{align*}
so
\begin{equation*}
(s-1)\zeta(s) > s - 1 + \frac{s+3}{2\cdot 2^s}
\end{equation*}
since
\begin{align*}
\int_{2}^\infty \frac{\fp{x}-1/2}{x^{s+1}} \dd x
&=\sum_{n=2}^{\infty} \int_n^{n+1} \frac{x-n-1/2}{x^{s+1}}\dd x
 =\sum_{n=2}^{\infty} \int_0^1 \frac{t-1/2}{(n+t)^{s+1}}\dd t\\
&= \sum_{n=2}^{\infty} \int_0^{1/2} \bigg(t - \frac{1}{2}\bigg)\left[ \frac{1}{(n+t)^{s+1}} - \frac{1}{(n+1-t)^{s+1}} \right]\dd t \\
&<0.
\end{align*}
An elementary argument confirms that
\begin{equation*}
s - 1 + \frac{s+3}{2\cdot 2^s} \gsl
s - (1-\gamma)\frac{s-1}{s} - \frac{(s-1)^2}{s^2}
\end{equation*}
and
\begin{equation*}
s - 1 + \frac{s+1}{2^s}
\lsl s - (1-\gamma)\frac{s-1}{s} - \frac{(s-1)^2}{3 s^2}    
\end{equation*}
in this range. This proves the desired bounds for $s > 3$. 

Suppose that $s\in[1,3]$. Then Lemma \ref{Lemma:ZhangWilliams} ensures that
\begin{align*}
&\Big|\sum_{n=N}^\infty \frac{(-1)^n}{n!}\gamma_n (s-1)^{n+1}\Big|
 \lsl \sum_{n=N}^\infty \frac{|\gamma_n|}{n!} |s-1|^{n+1} \\
&\qquad\lsl  4\sqrt{2} |s-1|\sum_{n=N}^\infty \frac{1}{n}\Big(\frac{2|s-1|}{e\pi}\Big)^{n}
 \lsl  \frac{4\sqrt{2}|s-1|}{N}\frac{w^N}{1-w},
\end{align*}
in which $w= 2|s-1|/(e\pi)$. Since $1 \lsl s \lsl 3$, we have $w\lsl 4/(e\pi)$ and hence
\begin{align*}
\Big|(s-1)\zeta(s) - 1 - \sum_{n<N} \frac{(-1)^n}{n!}\gamma_n (s-1)^{n+1}\Big|
 \lsl \frac{22}{N}\Big(\frac{2|s-1|}{e\pi}\Big)^N
\end{align*}
for every positive integer $N$. To complete the proof, fix $N=5$ and verify that the resulting polynomials
satisfy the inequalities for $s\in[1,3]$. \qed
\section{Proof of Theorem \ref{Theorem:FirstDerivative}}\label{Section:[(s-1)zeta(s)]'}
Recall that Theorem \ref{Theorem:FirstDerivative} concerns upper and lower bounds on $[(s-1)\zeta(s)]'$ for $s>1$. 
As mentioned in the introduction, this proof relies upon Theorem \ref{Theorem:DoublePrime}, which is proved in 
Section \ref{Section:DoublePrime}. This permits results about the first derivative to appear before the second. 
We split the argument into two parts.

Suppose that $1 \lsl s \lsl 4$. Then Theorem \ref{Theorem:DoublePrime} says that
\begin{equation*}
\Big[\frac{-(1-\gamma)^2}{1-\gamma - 2\gamma_1(s-1)}\Big]'
\lsl [(s-1)\zeta(s)]''
\lsl \Big[\frac{\gamma - 1 - 2\gamma_1(s-1)}{1 + \tfrac{1}{8}(s-1)^2}\Big]'.
\end{equation*}
Integrate over $[1,s]$ and get
\begin{equation*}
1-\gamma +   \frac{-(1-\gamma)^2}{1-\gamma - 2\gamma_1(s-1)} 
\lsl -\gamma + [(s-1)\zeta(s)]'   
\lsl 1-\gamma + \frac{\gamma - 1 - 2\gamma_1(s-1)}{1 + \tfrac{1}{8}(s-1)^2},
\end{equation*}
which implies the desired inequality for $1 \lsl s \lsl 4$.

Now suppose that $s>4$. Then
\begin{equation*}
[(s-1)\zeta(s)]'
 = (s-1)\zeta'(s) + \zeta(s)
 = 1 + \sum_{n=2}^\infty \frac{1-(s-1)\ln n}{n^s},
\end{equation*}
so we must prove that
\begin{equation*}
\frac{1-\gamma + 2\gamma_1(s-1)}{1 + \frac{1}{8}(s-1)^2}
 \lsl \sum_{n=2}^\infty \frac{(s-1)\ln n - 1}{n^s}
 \lsl \frac{(1-\gamma)^2}{1-\gamma - 2\gamma_1(s-1)}.
\end{equation*}
In fact,
\begin{align*}
\sum_{n=2}^\infty \frac{(s-1)\ln n - 1}{n^s}
&\lsl \frac{(s-1)\ln 2 - 1}{2^s} + (s-1)\sum_{n=3}^\infty \frac{\ln n}{n^s}       \\
&\lsl \frac{(s-1)\ln 2 - 1}{2^s} + (s-1)\int_{2}^\infty \frac{\ln t}{t^s} \dd t   \\
&=    \frac{(s-1)\ln 2 - 1}{2^s} + \frac{2\ln 2 + 2/(s-1)}{2^s} \\
&=    \frac{s\ln 2 +\ln 2 - 1 + 2/(s-1)}{2^s},
\end{align*}
from which it follows that
\begin{equation*}
\frac{(s-1)\ln 2 - 1}{2^s}
 \lsl \sum_{n=2}^\infty \frac{(s-1)\ln n - 1}{n^s}
 \lsl \frac{s\ln 2 +\ln 2 - 1 + 2/(s-1)}{2^s}.
\end{equation*}
For $s>4$, these bounds imply the required inequalities. \qed

\section{Proof of Theorem \ref{Theorem:DoublePrime}}\label{Section:DoublePrime}
Recall that Theorem \ref{Theorem:DoublePrime} concerns upper and lower bounds on $[(s-1)\zeta(s)]''$ for 
$1 \lsl s \lsl 4$. The definition \eqref{eq:Stieltjes} of the Stieltjes constants $\gamma_n$ ensures that
\begin{equation*}
[(s-1)\zeta(s)]'' = \sum_{n=1}^\infty \frac{(-1)^n}{n!}\gamma_n (n+1)n(s-1)^{n-1}.
\end{equation*}
Lemma \ref{Lemma:ZhangWilliams} yields%
\begin{align*}
\Big|&\sum_{n=N}^\infty \frac{(-1)^n}{n!}\gamma_n n(n+1)(s-1)^{n-1}\Big|
 \lsl \sum_{n=N}^\infty \frac{|\gamma_n|}{(n-1)!} (n+1)|s-1|^{n-1} \\
&\lsl  \frac{4\sqrt{2}}{|s-1|}\sum_{n=N}^\infty (n+1)\Big(\frac{2|s-1|}{e\pi}\Big)^{n}
 =    4\sqrt{2}\frac{w^N}{|s-1|}\cdot\frac{(N+1)(1-w)+w}{(1-w)^2}            \\
&= \frac{8\sqrt{2}}{e\pi}w^{N-1}\cdot\Bigl(\frac{N}{1-w}+\frac{1}{(1-w)^2}\Bigr),
\end{align*}
in which $w= 2|s-1|/(e\pi)$. If $s\in[1,4]$, then $0\lsl w\lsl 6/(e\pi)$ and hence
\begin{align*}
\Big|[(s {-} 1)\zeta(s)]'' {-} \sum_{n<N} \frac{(-1)^n}{n!}\gamma_n n(n {+}1)(s {-}1)^{n-1}\Big|
 \lsl \Big(\frac{2|s-1|}{e\pi}\Big)^{N-1}(5N {+} 15)
\end{align*}
for every $N$. To complete the proof we fix $N=30$.
The resulting polynomials satisfy the inequalities for $s\in[1,4]$.\qed

\section{Proof of Theorem \ref{Theorem:Combo}}\label{Section:Combo}
We seek explicit upper and lower bounds on $(\zeta'/\zeta)(s) + (s-1)^{-1}$. 
Since the proofs of these 
bounds are substantially different in nature, we present them in separate subsections.

\subsection{Proof of the lower bound}
The bounds in Theorems \ref{Theorem:JustZeta} and \ref{Theorem:FirstDerivative} yield
\begin{align*}
&\frac{\zeta'}{\zeta}(s) + \frac{1}{s-1}
= \frac{ [(s-1)\zeta(s)]'}{(s-1)\zeta(s)} \\
&\gsl \Big[1 - \frac{(1-\gamma)^2}{1-\gamma - 2\gamma_1(s-1)}\Big]
     \times
     \frac{1}{s - (1-\gamma)\frac{s-1}{s} - \frac{(s-1)^2}{3 s^2}}.
\end{align*}
When simplified, this is a rational function $L(t)$ in $t = s-1$ with numerator
\begin{equation*}
6 \gamma_1 t^3 
 + 3 (\gamma^2- \gamma+4 \gamma_1) t^2 
 + 6 (\gamma^2-\gamma+\gamma_1) t
 + 3 (\gamma^2- \gamma )
\end{equation*}
and denominator
\begin{align*}
&6\gamma_1 t^4
 + (3\gamma+10\gamma_1+6\gamma\gamma_1-3)t^3
 + (3\gamma^2+2\gamma+12\gamma_1+6\gamma\gamma_1-5)t^2 \\
&\qquad +(3\gamma^2+3\gamma+6\gamma_1-6)t
 + 3\gamma-3.
\end{align*}
This provides an explicit lower bound, albeit one that is cumbersome. We seek an alternative rational 
function $P(t)$ of smaller degree with the same behavior as $L(t)$ as $t\to 0^+$ and $t\to \infty$; that 
is, $\lim_{t\to 0^+} P(t) = \gamma$, $\lim_{t\to\infty} P(t) = 0$, and $P'(0) = L'(0)$.
This leads us to consider
\begin{equation*}
P(t) = \frac{\gamma + c t}{1+ 2 t + c t^2},
\end{equation*}
in which $c = 2\gamma - 2\gamma_1 - \gamma^2$. The inequality $L(t) > P(t)$ holds for $t>0$, so 
$P(s-1)$ is a lower bound for $\frac{\zeta'}{\zeta}(s) + \frac{1}{s-1}$.

\subsection{Proof of the upper bound}
Observe that $1-\gamma/(2\gamma_1)<5$ and 
\begin{equation*}
\frac{1}{s-1}\lsl \frac{\gamma}{1+(\gamma+2\gamma_1/\gamma)(s-1)}
\end{equation*}
for $s\gsl 5$. Since $(\zeta'/\zeta)(s) < 0$ for $s>1$,
it suffices to verify the upper bound for $1<s<5$.
For $s\gsl 0$ and integer $k \gsl 0$, define
\begin{equation*}
I_k(s) = \frac{1}{k!}\int_1^\infty\frac{\fp{t}}{t^{s+1}}\log^k t\, \dd t \quad \text{and} \quad    
J_k(s) = \frac{1}{k!}\int_1^\infty\frac{1-\fp{t}}{t^{s+1}}\log^k t\,\dd t
\end{equation*}
and observe that $I_k(s) +J_k(s) = \frac{1}{s^{k+1}}$.
\begin{equation}\label{eq:IJDerivatives}
I'_k(s)=-(k+1)I_{k+1}(s)
\quad \text{and} \quad
J'_k(s)=-(k+1)J_{k+1}(s)
\end{equation}
for $k \gsl 0$. 
Since $s J_0(s) = 1 - s I_0(s)$, we obtain
\begin{equation*}
[s J_0(s)]' 
 = J_0(s) - sJ_1(s) 
 = s I_1(s) - I_0(s).
\end{equation*}
We also have
\begin{align}
\zeta(s) 
&= \sum_{n=1}^\infty \frac{1}{n^s} 
 = s\int_1^\infty\frac{\ip{t}}{t^{s+1}}\dd t
 = \frac{s}{s-1} - s\int_1^\infty\frac{t-\ip{t}}{t^{s+1}}\dd t	\nonumber\\
&= \frac{s}{s-1} -s\int_1^{\infty} \frac{ \fp{t}}{t^{s+1}}\dd t 
 = \frac{s}{s-1} -s\int_1^{\infty} \frac{1-(1- \fp{t})}{t^{s+1}}\dd t \nonumber \\
&= \frac{s}{s-1} - 1 +s J_0(s) = \frac{1}{s-1} + sJ_0(s),\label{eq:ZetaJ0}
\end{align}
which leads to
\begin{equation*}
(s-1)\zeta(s)=1+s(s-1)J_0(s).
\end{equation*}
Take the derivative of the previous equation and obtain
\begin{equation*}
\zeta'(s)+(s-1)\zeta(s)=(2s-1)J_0(s)-s(s-1)J_1(s),
\end{equation*}
from which it follows that
\begin{align*}
&\frac{\zeta'}{\zeta}(s) + \frac{1}{s-1}
= \frac{(2s-1)J_0(s) - s(s-1)J_1(s)}{1 + s(s-1)J_0(s)}	\\
&\quad
= \frac{\gamma}{1+(\gamma+2r)(s-1)}				\\
&\qquad
  + \frac{sJ_0(s) - \gamma + (s-1)\gamma_1}{(1+(\gamma+2r)(s-1))(1 + s(s-1)J_0(s))} \\
&\qquad
  + (s-1)\frac{2r sJ_0(s)-\gamma_1-(1+(\gamma+2r)(s-1))(I_0(s)-sI_1(s))}{(1+(\gamma+2r)(s-1))(1 + s(s-1)J_0(s))},
\end{align*}
in which $r=\gamma_1/\gamma$. We can verify this directly. Let
\begin{equation*}
D=1+s(s-1)J_0(s)\qquad\text{and}\qquad
A=1+(\gamma+2r)(s-1).
\end{equation*}
Then desired identity is
\begin{align*}
&\frac{(2s-1)J_0(s)-s(s-1)J_1(s)}{D}
 = \frac{\gamma}{A} +\frac{sJ_0(s)-\gamma+(s-1)\gamma_1}{AD} \\
&\quad\qquad 
  + (s-1)\frac{2rsJ_0(s)-\gamma_1-A(I_0(s)-sI_1(s))}{AD}.
\end{align*}
Multiply by $AD$ and observe that it suffices to prove that
\begin{align*}
&A\big((2s-1)J_0-s(s-1)J_1\big)
 = \gamma D+ (sJ_0-\gamma+(s-1)\gamma_1) \\
&\qquad 
  + (s-1)\big(2rsJ_0-\gamma_1-A(I_0-sI_1)\big),
\end{align*}
in which we have suppressed the arguments of $I_0,I_1,J_0,J_1$ for typographical clarity. 
The right side of the above is
\begin{align*}
&\gamma(D-1)+sJ_0+2rs(s-1)J_0 -A(s-1)(I_0-sI_1) \\
&\qquad = \gamma s(s-1)J_0+sJ_0 +2rs(s-1)J_0 -A(s-1)(I_0-sI_1)\\
&\qquad = sJ_0\big(1+(\gamma+2r)(s-1)\big) -A(s-1)(I_0-sI_1) \\
&\qquad = A\big(sJ_0-(s-1)(I_0-sI_1)\big)\\
&\qquad = A\big(sJ_0+(s-1)(J_0-sJ_1)\big)\\
&\qquad = A\big((2s-1)J_0-s(s-1)J_1\big),
\end{align*}
as required.

To complete the proof of Theorem \ref{Theorem:Combo}, it suffices to prove that
\begin{align*}
f(s) &= sJ_0(s) - \gamma + (s-1)\gamma_1	,\\
g(s) &= 2r_1sJ_0(s) - \gamma_1 - (1+(\gamma+2r_1)(s-1))(I_0(s)-sI_1(s)),
\end{align*}
are negative for $1<s<5$. From \eqref{eq:ZetaJ0}, we see that $J_0(1)=\gamma$, and that
\begin{equation}\label{eq:IJZ}
(sJ_0(s))'=J_0(s)-sJ_1(s)=sI_1(s)-I_0(s)=\zeta'(s)+\frac{1}{(s-1)^2},    
\end{equation}
hence $I_1(1)-I_0(1)=-\gamma_1$. Therefore,
\begin{equation*}
f(1) = g(1) =  0.
\end{equation*}

\medskip\noindent\textsc{Negativity of $f(s)$.}
We also have
\begin{equation*}
 f'(s) = J_0(s)-sJ_1(s)+\gamma_1 = sI_1(s)-I_0(s)+\gamma_1,    
\end{equation*}
so that $f'(1)=0$.
We see that
\begin{align*}
f''(s)	&= 2I_1(s) - 2sI_2(s)
 = -\int_1^\infty\frac{\fp{t}}{t^{s+1}}(s\log^2 t-2\log t)\dd t	\\
&= -\sum_{n=1}^\infty\int_n^{n+1}\frac{t-n}{t^{s+1}}(s\log^2 t-2\log t)\dd t.
\end{align*}
Let
\begin{equation*}
j_n(s) = \int_n^{n+1}\frac{t-n}{t^{s+1}}(s\log^2 t-2\log t)\dd t,
\end{equation*}
which can be evaluated in closed form;
we elect not to display the results.

Observe that $j_n(s)>0$ if $s\log n>2$, which holds for $n\gsl 8>e^2$.
On $1 < s < 5$, a computation confirms that
$\sum_{n=1}^{3500}j_n(s)>0$, so that $f''(s)<0$.
Thus, $f''(s)<0$ for any $1<s<5$. Since $f(1)=f'(1)=0$, it follows that $f(s)<0$
for $1<s<5$.

\medskip\noindent\textsc{Negativity of $g(s)$.}
Observe that
\begin{align*}
g'(s)	
&= 2r_1(sI_1(s)-I_0(s))
	 - (\gamma+2r_1)(I_0(s)-sI_1(s)) \\
&\qquad\qquad 	 
  + 2(1+(\gamma+2r_1)(s-1))(I_1(s)-sI_2(s))	\\
&= (\gamma+4r_1)(sI_1(s)-I_0(s))
	 + 2(1+(\gamma+2r_1)(s-1))(I_1(s)-sI_2(s)) \\
&= -(\gamma+4r_1)I_0(s) + ((8r_1+3\gamma)s+2-4r_1-2\gamma)I_1(s)\\
&\qquad\qquad	 
   - 2(1+(\gamma+2r_1)(s-1))sI_2(s)		\\
&= -\int_1^\infty \frac{\fp{t}}{t^{s+1}}
		\Big[(\gamma+4r_1)
		    - [(8r_1+3\gamma)s+2-4r_1-2\gamma]\log t \\
&\qquad\qquad		    
            + [1+(\gamma+2r_1)(s-1)]s\log^2 t
		\Big]\dd t.    
\end{align*}
For each $s\gsl 1$, the degree-$2$ polynomial of $z=\log t$ inside the integrand has two real zeros 
$z_+(s) \gsl z_-(s)$. These zeros are decreasing on $[1,\infty)$.
The largest value of $z_+(s)$ is reached at $s=1$; it is smaller than $2.04$
and it satisfies $t = \exp(z_+(1)) < 8$. Therefore, the
integrand is positive for $t \gsl 8$.

For the rest of the integral, we proceed as before. For $n\gsl 1$, define
\begin{align*}
k_n(s) = \int_n^{n+1} \frac{t-n}{t^{s+1}} \Big[&(\gamma+4r_1)(s\log t-1) \\
& + \big(1+(\gamma+2r_1)(s-1)\big)(2\log t-s\log^2 t)\Big]\dd t,
\end{align*}
so that $g'(s) = \sum_{n=1}^{\infty} k_n(s)$.
Since
\begin{align*}
&\sum_{n=1}^N \int_n^{n+1}\frac{t-n}{t^{s+1}}(s\log t-1)\dd t \\
&\qquad = -\sum_{n=1}^N \frac{\log(n+1)}{(n+1)^s}
 + \frac{(N+1)^{s-1}-1-(s-1)\log(N+1)}{(N+1)^{s-1}(s-1)^2} 
\end{align*}
and
\begin{align*}
&\sum_{n=1}^N \int_n^{n+1}\frac{t-n}{t^{s+1}}(s\log^2 t-2\log t)\dd t
= -\sum_{n=1}^N \frac{\log^2(n+1)}{(n+1)^s}			\\
&\qquad
 + \frac{2(N{+}1)^{s{-}1}{-}2{-}2(s{-}1)\log(N{+}1){-}(s{-}1)^2\log^2(N{+}1)}{(N+1)^{s-1}(s-1)^3},
\end{align*}
one can obtain an expression for $\sum_{n=1}^Nk_n(s)$; the singularities at $s=1$ are removable.
For $N=9500$ and $1<s<5$, a computation confirms that
\begin{equation*}
g'(s)<\sum_{n=1}^N k_n(s)<0.
\end{equation*}
Therefore, $g(s)<0$ for $1<s<5$. This concludes the proof. \qed

\section{Proof of Theorem \ref{Theorem:Decreasing}}\label{Section:Decreasing}
We must show that $(\zeta'/\zeta)(s) + (s-1)^{-1}$ is decreasing for $s>1$. We first consider $s\gsl 4$, 
which can be handled via a simple Dirichlet-series argument, and then address the more delicate region 
$1 < s < 4$ with certain sum over the nontrivial zeroes of the zeta function.

For $s\gsl 4$, we use
$(\zeta'/\zeta)(s) = - \sum_{n=1}^\infty \Lambda(n) n^{-s}$,
in which $\Lambda(n)$ is the von Mangoldt function. Then
\begin{align*}
\Bigl(\frac{\zeta'}{\zeta}\Bigr)'(s)
&  =  \sum_{n=1}^\infty\frac{\Lambda(n)\log n}{n^s}
 \lsl \sum_{n=1}^\infty\frac{\log^2 n}{n^s} \\
&\lsl \frac{4}{(es)^2}
     +  \int_1^{\infty} \frac{\log^2 x}{x^s}\dd x
   =  \frac{4}{(es)^2}
     +  \frac{2}{(s-1)^3}\\
&\lsl \frac{1}{(s-1)^2}
\end{align*}
since $x\mapsto \frac{\log^2 x}{x^s}$ attains its
maximum $4/(es)^2$ at $x=\exp(2/s)$ and because $s \gsl 4$.
Thus, $(\zeta'/\zeta)(s) + (s-1)^{-1}$ is decreasing on
$[4,\infty)$. 

Suppose that $1<s<4$. Hadamard's factorization provides
\begin{equation*}
s(s-1)\Gamma\Bigl(\frac{s}{2}\Bigr)\zeta(s)
= e^{a+bs}\prod_{\rho}e^{s/\rho}\Bigl(1-\frac{s}{\rho}\Bigr)    
\end{equation*}
for certain constants $a$, $b \in \CM$, in which $\rho$ runs over the nontrivial zeros of $\zeta$. Since 
$s\Gamma(s)=\Gamma(s+1)$, a logarithmic differentiation of the above shows that
\begin{equation*}
\frac{\zeta'}{\zeta}(s)
  + \frac{1}{s-1}
= b - \frac{1}{2}\frac{\Gamma'}{\Gamma}\Bigl(\frac{s}{2}+1\Bigr)
  + \sum_{\rho}\Bigl(\frac{1}{s-\rho} + \frac{1}{\rho}\Bigr).  
\end{equation*}
Take another derivative, add zeros in conjugate pairs, and get
\begin{align}
\bigg(\frac{\zeta'}{\zeta}(s) + \frac{1}{s-1}\bigg)'
&= - \frac{1}{4}\bigg(\frac{\Gamma'}{\Gamma}\bigg)'\bigg(\frac{s}{2}+1\bigg)  - \sum_{\rho}\frac{1}{(s-\rho)^2} \label{eq:SumRho}\\
&= - \sum_{k=1}^\infty \frac{1}{(s+2k)^2}
  + \sum_{\rho}
  \frac{\Imm(\rho)^2 - (s-\Ree(\rho))^2}{((s-\Ree(\rho))^2 + \Imm(\rho)^2)^2} \nonumber
\end{align}
since \cite[Thm.~1.2.5 \& (1.2.14)]{AnAsRoy} ensures that
\begin{equation*}
\bigg(\frac{\Gamma'}{\Gamma}\bigg)'(s)
= \sum_{k=0}^\infty \frac{1}{(s+k)^2}    .
\end{equation*}
For $1 < s < 4$, the first series satisfies
\begin{equation*}
-\sum_{k=1}^\infty \frac{1}{(s+2k)^2} 
\lsl -\sum_{k=1}^\infty \frac{1}{(4+2k)^2}
=    \frac{15-2 \pi ^2}{48} 
<    -0.9.
\end{equation*}
Each summand of the second series decreases as a function of $(s-\Ree(\rho))^2$ since $1<s<4$ and the 
imaginary part of the first nontrivial zero is $\approx 14.13$. Thus, its value for $1 < s < 4$ is 
less than its value at $s=1$, which is
\begin{equation}\label{eq:sum-rho}
\sum_{\rho}
\frac{\Imm(\rho)^2 - (1-\Ree(\rho))^2}{((1-\Ree(\rho))^2 + \Imm(\rho)^2)^2}
= -\sum_{\rho} \frac{1}{(1-\rho)^2}
= -\sum_{\rho} \frac{1}{\rho^2}
\lsl 0.05
\end{equation}
because of the symmetry $\rho \mapsto 1 - \rho$ 
of the nontrivial zeros; see Remark \ref{Remark:ZeroSum} below for an exact evaluation of the sum.
Thus, $(\zeta'/\zeta)(s) + (s-1)^{-1}$ decreases for $s\in(1,4)$, and hence for all $s>1$. \qed

\begin{rk}\label{Remark:ZeroSum}
The sum $\sum_{\rho} \rho^{-2}$ that appears at the end of the previous proof can be evaluated explicitly. 
Evaluate \eqref{eq:SumRho} at $s=1$ and obtain
\begin{align*}
-\gamma^2-2\gamma_1 
&= \bigg(\frac{\zeta'}{\zeta}(s) + \frac{1}{s-1}\bigg)' \bigg|_{s=1}
 = -\sum_{k=1}^{\infty} \frac{1}{(2k+1)^2}- \sum_{\rho} \frac{1}{(1-\rho)^2} \\
&= -\bigg(\frac{\pi^2}{8}-1\bigg) - \sum_{\rho} \frac{1}{(1-\rho)^2},
\end{align*}
so
\begin{equation*}
-\sum_{\rho} \frac{1}{\rho^2}
 = -\sum_{\rho} \frac{1}{(1-\rho)^2}
 = \frac{\pi ^2}{8}-2 \gamma _1-\gamma ^2-1
 \approx 0.046154.
\end{equation*}
\end{rk}

\section{Proof of Theorem \ref{Theorem:ZetaPrime}}\label{Section:ZetaPrime}
Let $t = s-1 > 0$, let $N$ be a positive integer, and define
\begin{equation*}
Z(t) = \zeta'(1+t) + \frac{1}{t^2}
\quad \text{and} \quad
Z_N(t) = \sum_{n=1}^N \frac{(-1)^n \gamma_n}{(n-1)!}t^{n-1}.
\end{equation*}
For $0 \lsl t < \pi/2$, \eqref{eq:Stieltjes} and Lemma \ref{Lemma:ZhangWilliams} ensure that
\begin{equation}\label{eq:ChainZEN}
Z_N(t) - E_N(t) \lsl Z(t) \lsl Z_N(t) + E_N(t),
\end{equation}
in which
\begin{equation*}
E_N(t) 
= \frac{4\sqrt{2}\big( \frac{2}{\pi e}\big)^{N+1}t^N}{1 - \frac{2t}{\pi e}}.
\end{equation*}

For $0 \lsl t \lsl 1/2$, we claim that (see proof below)
\begin{equation*}
\alpha \lsl (1+\beta t+\alpha t^2)(Z_{4}(t) - E_{4}(t))
\quad \text{and} \quad
(1+\beta t+\delta t^2)(Z_{6}(t) + E_{6}(t))  \lsl \alpha.
\end{equation*}
In light of \eqref{eq:ChainZEN}, these inequalities imply the desired bounds
\begin{equation*}
\frac{\alpha}{1+\beta t+\alpha t^2} < Z(t) < \frac{\alpha}{1+\beta t+\delta t^2}
\end{equation*}
of Theorem \ref{Theorem:ZetaPrime} for $0\lsl t\lsl 1/2$.
The real rational function
\begin{equation*}
U(t) = (1+\beta t+\alpha t^2)(Z_{4}(t) - E_{4}(t))-\alpha
\end{equation*}
has a double zero at $t=0$ (by construction) and real zeros at $t \approx -2.697$ and $t \approx 1.970$; 
the remaining zeros are nonreal. Since $U(1/2) > 0$, 
it follows that $U(t) \gsl 0$ on $0 \lsl t \lsl 1/2$. Similarly, the real rational function
\begin{equation*}
L(t) = (1+\beta t+\delta t^2)(Z_{6}(t) + E_{6}(t)) - \alpha
\end{equation*}
has a triple zero at $t=0$ and a simple real zero at $t \approx 2.063$; the remaining zeros are nonreal. 
Since $L(1/2) < 0$, we deduce that $L(t) \lsl 0$ on $0 \lsl t \lsl 1/2$. This completes the proof of the 
claim.

Let $f_t(x) = x^{-(1+t)}\log x$. Since
\begin{equation*}
f_t''(x) = x^{-3-t} [(1+t)(2+t)\log x-(3+2t)]
\end{equation*}
and
\begin{equation*}
\frac{2t+3}{(t+1)(t+2)} \lsl \frac{3}{2} < \log\bigg( \frac{9}{2}\bigg)
\end{equation*}
for $t \gsl 0$, we see that $f_t(x)$ is convex for $x \gsl \frac{9}{2}$.
Therefore,
\begin{equation*}
\sum_{n=N+1}^{\infty}f_t(n)
\lsl \sum_{n=N+1}^{\infty} \int_{n-1/2}^{n+1/2}f_t(x)\dd x
= \int_{N+1/2}^{\infty}f_t(x)\dd x
\end{equation*}
for $N \gsl 4$ and hence
\begin{align*}
Z(t)
&= \zeta'(1+t)+\frac{1}{t^2}
 = \int_1^\infty f_t(x)\dd x - \sum_{n=2}^{\infty}f_t(n) \\
&= \int_1^{N+1/2} f_t(x)\dd x + \bigg(\int_{N+1/2}^\infty f_t(x)\dd x
  -\sum_{n=N+1}^{\infty}f_t(n) \bigg)- \sum_{n=2}^{N}f_t(n) \\
&\gsl \int_1^{N+1/2} f_t(x)\dd x - \sum_{n=2}^{N}f_t(n) =A_N(t),
\end{align*}
in which
\begin{equation*}
A_N(t) = \frac{1}{t^2} - \frac{ t \log (N+\frac{1}{2})+1}{t^2(N+\frac{1}{2})^t}- \sum_{n=2}^N \frac{\log n}{n^{1+t}}.
\end{equation*}
For $t \gsl 1/2$, a computation confirms that
\begin{equation*}
A_{7}(t) > \frac{\alpha}{1 + \beta t + \alpha t^2}.
\end{equation*}
This completes the proof of the desired lower bound.

The convexity of $f_t(x)$ on $[9/2,\infty)$ ensures that
\begin{equation*}
\int_n^{n+1} f_t(x)\dd x \lsl \frac{f_t(n)+f_t(n+1)}{2}.
\end{equation*}
For $N\gsl 4$, we have
\begin{equation*}
\int_{N+1}^{\infty} f_t(x)\dd x \lsl \frac{1}{2} f_t(N+1) + \sum_{n=N+2}^{\infty} f_t(n)
\end{equation*}
and hence
\begin{equation*}
\int_{N+1}^{\infty} f_t(x)\dd x - \sum_{n=N+1}^{\infty} f_t(n) \lsl -\frac{1}{2}f_t(N+1).
\end{equation*}
Thus,
\begin{align*}
Z(t)
&=  \int_1^\infty f_t(x)\dd x - \sum_{n=2}^{\infty}f_t(n) \\
&=  \int_1^{N+1} f_t(x)\dd x - \sum_{n=2}^N f_t(n)  + \int_{N+1}^\infty f_t(x)\dd x  - \sum_{n=N+1}^{\infty}f_t(n) \\
&\lsl \int_1^{N+1} f_t(x)\dd x - \sum_{n=2}^{N} f_t(n) - \frac{1}{2} f_t(N+1) =B_N(t),
\end{align*}
in which
\begin{equation*}
B_N(t)=
\frac{1}{t^2}-
\frac{t\log(N+1)+1}{t^2(N+1)^t} - \sum_{n=2}^{N}\frac{\log n}{n^{1+t}} - \frac{\log(N+1)}{2(N+1)^{1+t}}.
\end{equation*}
A computation confirms that
\begin{equation*}
B_{83}(t) < \frac{\alpha}{1+\beta t+\delta t^2},
\end{equation*}
for $t \gsl 1/2$. This completes the proof of the desired upper bound for all $t \gsl 0$ and hence the proof 
of Theorem \ref{Theorem:ZetaPrime}. \qed

\end{document}